\newcommand*{\circled}[1]{\lower.7ex\hbox{\tikz\draw (0pt, 0pt)%
    circle (.5em) node {\makebox[1em][c]{\small #1}};}}
\def\int{\displaystyle\!int}
\def\lim{\displaystyle\!lim}
\def\sum{\displaystyle\!sum}
\def\sup{\displaystyle\!sup}
\def\inf{\displaystyle\!inf}
\def\cap{\displaystyle\!cap}
\def\max{\displaystyle\!max}
\def\min{\displaystyle\!min}
\let\oldsection\section
\renewcommand\section{\setcounter{equation}{0}\oldsection}
\newtheorem{defn}{Definition}[section]
\newtheorem{thm}{Theorem}[section]
\newtheorem{prop}[thm]{Proposition}
\newtheorem{lem}[thm]{Lemma}
\newtheorem{rem}[thm]{Remark}
\begin{document}
%%%%%%%%%%%%%%%%%%%% title %%%%%%%%%%%%%%%%%%%%%%%%%%%%%%%%%%%%%%%%%%%%%%%%%%

\title{Remarks on Sharp Interface Limit for an Incompressible Navier-Stokes and Allen-Cahn Coupled System}
\author{Song Jiang\footnote{Institute of Applied Physics and Computational Mathematics. Email: jiang@iapcm.ac.cn}
\and
Xiangxiang Su\footnote{School of Mathematical Sciences, Shanghai Jiao Tong University, Shanghai 200240, P. R. China. Email: sjtusxx@sjtu.edu.cn}
\and
Feng Xie
\footnote{Corresponding author.
School of Mathematical Sciences, and CMA-Shanghai, Shanghai Jiao Tong University, Shanghai 200240, P. R. China.
Email: tzxief@sjtu.edu.cn}
}
\date{}
\maketitle

\noindent{\bf Abstract:} We are concerned with the sharp interface limit for an incompressible Navier-Stokes and Allen-Cahn coupled system
in this paper. When the thickness of the diffuse interfacial zone, which is parameterized by $\varepsilon$, goes to zero,
we prove that a solution of the incompressible Navier-Stokes and Allen-Cahn coupled system converges to a solution
of a sharp interface model in the $L^\infty(L^2)\cap L^2(H^1)$ sense on a uniform time interval independent of
the small parameter $\varepsilon$. The proof consists of two parts: one is the construction of a suitable approximate solution
and another is the estimate of the error functions in Sobolev spaces. Besides the careful energy estimates,
a spectral estimate of the linearized operator for the incompressible Navier-Stokes and Allen-Cahn coupled system
around the approximate solution is essentially used to derive the uniform estimates of the error functions.
The convergence of the velocity is well expected due to the fact that the layer of the velocity across the diffuse interfacial
zone is relatively weak.
\vskip0.2cm

\noindent {\bf Keywords:} Sharp interface limit, incompressible Navier-Stokes equations, Allen-Cahn equation, spectral estimate, energy estimates.

% In setting up this template for *Science* papers, we've used both
% the \section* command and the \paragraph* command for topical
% divisions.  Which you use will of course depend on the type of paper
% you're writing.  Review Articles tend to have displayed headings, for
% which \section* is more appropriate; Research Articles, when they have
% formal topical divisions at all, tend to signal them with bold text
% that runs into the paragraph, for which \paragraph* is the right
% choice.  Either way, use the asterisk (*) modifier, as shown, to
% suppress numbering.
%\section{}
%\subsection{}
%

\section{Introduction and Main Results}

\hspace{2em}
The two-phase flow finds many applications in chemistry and engineering sciences. It also produces many interesting but challenging mathematical problems from both analysis
and numerical simulation points of view.
Basically, there are two widely used models: the sharp interface model and the diffuse interface model respectively.
The sharp interface model is related to a free boundary value problem. That is, the two fluids
are separated by an interface $\Gamma$, where the interface $\Gamma$ is a lower dimensional surface, which will be determined together with the motion of two fluids. In general, such a sharp interface model
is hard to be handled in the numerical simulation. Thus, the so-called diffuse interface model (also known as the phase field model) is introduced accordingly, where the sharp interface is replaced
by an interfacial region, which takes into account that the two fluids have been mixing to a certain extent in the interfacial region. Here the width of diffuse interfacial zone is parameterized
by a small parameter $\varepsilon$. And an order parameter, which will be represented by $c_\varepsilon$, is also introduced. It  takes two different values (for example, +1 and -1) in each phase,
and changes smoothly between the two values in the diffuse interfacial zone. The basic diffuse interface model of a two-phase flow for two macroscopically immiscible viscous Newtonian fluids with
the same density can be traced back to Hohenberg and Halperin \cite{Hohenberg}, which is named as ``Model H". Such a model is described by the incompressible Navier-Stokes/Cahn-Hilliard coupled system in \cite{Gurtin}.

Under suitable initial and boundary conditions, this diffuse interface model for two-phase flows of incompressible fluids was shown to admit both weak and strong solutions
in 2D and 3D bounded domains in \cite{Abels 0,Abels 2,Gal,Gal2}. And the asymptotic stability of solutions to the diffuse interface model was given in \cite{Boyer}.
We also refer to \cite{Gal}, where the authors established the existence of the exponential attractor, and obtained at the same time the estimates of the convergence rate in the phase-space metric.

The corresponding sharp interface model has also been extensively studied. The local in time existence of strong solutions was established in \cite{Abels},
and the long time existence of weak solutions was shown in \cite{Abels 3}. One can refer to \cite{Kohne,Pruss0,Pruss} and the references cited therein for the related results in this field.

There are many extensively studies about the Cahn-Hilliard equation and the Allen-Cahn equation respectively. For the fourth-order model, the global existence and the time decay estimates
of smooth solutions in the $L^p$ sense to the Cauchy problem were established in \cite{Liu-W-Z}.
%%% Moreover, the authors gave the time decay estimates of solutions in the $L^p$ sense.
The sharp interface limit for the Cahn-Hilliard equation was considered in \cite{Alikakos 0} by the method of matched asymptotical expansions, while the Navier-Stokes/Cahn-Hilliard coupled system
was analyzed through the Fourier-spectral method for the numerical approximation in \cite{Liu}.

From the work \cite{Elliott}, we see that it is naturally related to the Allen-Cahn equation when some complex geometric problems are considered (``motion by mean curvature'').
  There have been a lot of references on this second-order equation in both one- and multi-dimensional cases, see \cite{Chen X0,de Mottoni,Evans,Fusco} for example.
  Also, the axisymmetric solutions of the Navier-Stokes/Allen-Cahn system in $\mathbb{R}^3$ was investigated in \cite{Xu}, where the authors proved the global regularity of solutions for both large viscosity and small initial data cases.

  The vanishing viscosity limit of the Navier-Stokes/Allen-Cahn system was studied in \cite{Zhao}, where the authors proved that a global weak solution of the Navier-Stokes/Allen-Cahn system converges in the $L^2$ sense to the locally smooth solution of the Euler/Allen-Cahn system on a small time interval.

Besides the aforementioned results on sharp and diffuse interface models of two-phase flows,
it is interesting from both mathematical and application points of view to study
the sharp interface limit from a diffuse interface model to a sharp interface one.
Recently, Abels and Liu \cite{Liu Y N} proved that a weak solution of the Stokes/Allen-Cahn system converges to the solution of
a sharp interface model over a small time interval, also cf. \cite{Liu-Y}. The sharp interface limit for the Navier-Stokes/Allen-Cahn system
was studied more recently by Abels and Fei in \cite{Fei}, while the sharp interface limit for the Stokes/Cahn-Hilliard coupled system
was dealt with in \cite{Abels 4}.
 We point out that this paper is concerned with the same sharp interface limit problem for the Navier-Stokes/Allen-Cahn system
as studied in \cite{Fei}. However, the approaches used to derive the estimates of the error functions between
this paper and \cite{Fei} are different. In particular, it is diverse in estimating the derivatives of the error functions,
which will be discussed in details later.

Precisely, we are concerned with the sharp interface limit of solution to an incompressible Navier-Stokes/Allen-Cahn coupled
system in a bounded domain $\Omega \subset \mathbb{R}^{2}$:
\begin{align}
\partial_t\mathbf{v}_\varepsilon+\mathbf{v}_\varepsilon \nabla \mathbf{v}_\varepsilon-\Delta \mathbf{v}_{\varepsilon}+\nabla p_{\varepsilon} &=-\varepsilon \operatorname{div} (\nabla c_{\varepsilon} \otimes \nabla c_{\varepsilon}) && \text { in } \Omega \times(0, T_{1}), \label{9.6.4}\\
\operatorname{div} \mathbf{v}_{\varepsilon} &=0 && \text { in } \Omega \times (0, T_{1}), \\
\partial_{t} c_{\varepsilon}+\mathbf{v}_{\varepsilon} \cdot \nabla c_{\varepsilon} &=\Delta c_{\varepsilon}-\frac{1}{\varepsilon^{2}} f^{\prime}\left(c_{\varepsilon}\right) && \text { in } \Omega \times (0, T_{1}), \label{10.19.1}\\
\mathbf{v}_{\varepsilon}|_{\partial \Omega} = 0, \qquad &  c_{\varepsilon}|_{\partial \Omega}=-1 &&\text { on } \partial \Omega \times (0, T_{1}), \\
\mathbf{v}_{\varepsilon}|_{t=0} =\mathbf{v}_{0, \varepsilon}, \qquad & c_{\varepsilon}|_{t=0} =c_{0, \varepsilon} && \text { in } \Omega, \label{9.6.5}
\end{align}
 where $\mathbf{v}_{\varepsilon}$ stands for the velocity vector, $ p_{\varepsilon}$ denotes the pressure, $c_{\varepsilon}$ is the order parameter related to the fluid concentration (for example, the concentration difference or the concentration of one component), and $\varepsilon$
is a small positive parameter which describes the ``thickness" of the diffuse interfacial region. As in \cite{Chen X1, Chen X}, the potential function $f$ satisfies
\begin{eqnarray}
\label{PF}
f \in C^\infty(\mathbb{R}),\quad f'(\pm1)=0,\quad f''(\pm1)> 0,\quad f(c)=f(-c)> 0 \quad \forall c \in(-1,1).
\end{eqnarray}
A typical example is $f(c)=\frac{1}{8}(1-c^2)^2$, which is also the potential function considered in this paper. We believe that the main results in this paper can be extended to the general potential function case (\ref{PF}) without any essential difficulties.

Multiplying (\ref{10.19.1}) by $\varepsilon^2\Delta c_{\varepsilon}-f^{\prime}(c_{\varepsilon})$ and integrating by parts, one obtains
\begin{align*}
\int_{\Omega} (\varepsilon \Delta c_{\varepsilon}-\frac{1}{\varepsilon} f^{\prime}(c_{\varepsilon}))^{2} \mathrm{d} x=& \int_{\Omega} (\varepsilon \Delta c_{\varepsilon}-\frac{1}{\varepsilon} f^{\prime}(c_{\varepsilon})) (\varepsilon\partial_{t} c_{\varepsilon}+\varepsilon\mathbf{v}_{\varepsilon} \cdot \nabla c_{\varepsilon} )\,\mathrm{d} x\\
=&\int_{\Omega} -\partial_{t}\left[\frac{\varepsilon^2}{2}|\nabla c_{\varepsilon}|^2+f(c_{\varepsilon})\right]-\frac{\varepsilon^2}{2} \nabla \mathbf{v}_{\varepsilon} \nabla c_{\varepsilon}\otimes \nabla c_{\varepsilon}\, \mathrm{d} x,
\end{align*}
where the divergence-free condition of $\mathbf{v}_{\varepsilon}$ is used in the last equality. Moreover, it follows from (\ref{9.6.4}) that
\begin{align*}
\int_{\Omega} \varepsilon \nabla \mathbf{v}_{\varepsilon} \nabla c_{\varepsilon}\otimes \nabla c_{\varepsilon}\, \mathrm{d} x=\int_{\Omega}
\Big( \frac{1}{2}\partial_{t}(\mathbf{v}_{\varepsilon})^2+(\nabla \mathbf{v}_{\varepsilon})^2 \Big)\mathrm{d} x.
\end{align*}
Thus, putting the above two equalities together and integrating the resulting equality with respect to $t$, we arrive at the basic
energy equality for solutions to the equations (\ref{9.6.4})-(\ref{9.6.5}):
\begin{eqnarray} \label{10.18.3}
E_{\varepsilon}^{tot}\left(c_{\varepsilon}(t)\right)+\int_{0}^{t} \int_{\Omega}(\frac{1}{2}|\nabla \mathbf{v}_{\varepsilon}|^{2}+\frac{1}{\varepsilon}|\mu_{\varepsilon}|^{2}) \mathrm{d} x\,\mathrm{d} \varsigma=E_{\varepsilon}^{tot}(c_{0, \varepsilon}) \quad\mbox{ for all } t \in (0, T_{1}),
\end{eqnarray}
 where
\begin{align} \label{3.18.2}
\mu_{\varepsilon}=-\varepsilon \Delta c_{\varepsilon}+\frac{1}{\varepsilon} f^{\prime}(c_{\varepsilon}),
 \end{align}
and
$$
E_{\varepsilon}\left(c_{\varepsilon}(t)\right)=\int_{\Omega} \varepsilon \frac{|\nabla c_{\varepsilon}(x, t)|^{2}}{2} +\frac{f\left(c_{\varepsilon}(x, t)\right)}{\varepsilon} \mathrm{d} x,\quad E_{\varepsilon}^{tot}\left(c_{\varepsilon}(t)\right)=\int_{\Omega} \frac{|\mathbf{v}_{\varepsilon}(x, t)|^{2}}{4} \mathrm{~d} x+E_{\varepsilon}\left(c_{\varepsilon}(t)\right).
$$

From the energy equality (\ref{10.18.3}), we find that $\mathbf{v}_{\varepsilon}$ has no strong layer across the diffuse interfacial zone as $\varepsilon$ goes to zero. This fact will be used in the construction of approximate solution of $\mathbf{v}_{\varepsilon}$.
One can refer to Section 3 for the details.

When the thickness parameter $\varepsilon$ in \eqref{9.6.4}-\eqref{9.6.5} goes to zero, the diffuse interfacial
zone will shrink into a lower dimensional surface $\Gamma_t\subseteq\Omega$ (which excludes contact angle problems),
which is a free boundary. And then $\Omega$ is separated
into two smooth domains $\Omega^\pm(t)$ by the sharp interface $\Gamma_t$ for each $t\in[0, T_0]$, where $\Omega^+$ is the internal
domain and $\Omega^-$ is the external domain. And the order parameter takes values of $-1$ in $\Omega^{-}$ and $1$ in $\Omega^{+}$
respectively in the limit case. The velocity $\mathbf{v}$ is expected to continuous across the sharp interface $\Gamma_t$
due to the diffusion effect of the velocity. Moreover, it also satisfies a surface tensor constrain.
Consequently, the sharp interface
limit problem for (\ref{9.6.4})-(\ref{9.6.5}), we shall prove, is the following free boundary value problem:
\begin{align}
\partial_t\mathbf{v}+\mathbf{v} \nabla \mathbf{v}-\Delta \mathbf{v}+\nabla p & =0 && \text { in } \Omega^{\pm}(t),\  t \in [0, T_{0}], \label{11.8.1}\\
\operatorname{div} \mathbf{v} & =0 && \text { in } \Omega^{\pm}(t),\  t \in [0, T_{0}], \\
{[2 D \mathbf{v}-p \mathbf{I}] \mathbf{n}_{\Gamma_{t}}} & =-\sigma H_{\Gamma_{t}} \mathbf{n}_{\Gamma_{t}} && \text { on } \Gamma_{t},\  t \in [0, T_{0}], \\
{[\mathbf{v}]} & =0 && \text { on } \Gamma_{t},\  t \in [0, T_{0}], \\
\mathbf{v}|_{\partial \Omega} & =0 && \text { on } \partial \Omega \times [0, T_{0}], \\
V_{\Gamma_{t}}-\mathbf{n}_{\Gamma_{t}} \cdot \mathbf{v}|_{\Gamma_{t}} & =H_{\Gamma_{t}} && \text { on } \Gamma_{t},\  t \in [0, T_{0}] \label{11.8.2},
\end{align}
where $D \mathbf{v}=\frac{1}{2}(\nabla\mathbf{v}+(\nabla\mathbf{v})^T)$ is the symmetric part of the velocity gradient tensor. $\mathbf{n}_{\Gamma_t}$ is the unit interior normal of $\Gamma_t$ with respect to $\Omega^{+}(t)$, and
$$
[h](p, t)=\lim _{d \rightarrow 0+}[h\left(p+\mathbf{n}_{\Gamma_{t}}(p) d\right)-h\left(p-\mathbf{n}_{\Gamma_{t}}(p) d\right)].
$$
And $H_{\Gamma_{t}}$ and $V_{\Gamma_{t}}$ are the curvature and the normal velocity of the interface $\Gamma_t$,
respectively. $\sigma$ is the coefficient of surface tension.

To determine $\sigma$, it is necessary to introduce the following profile $\theta_0$, which is the unique
increasing solution of
$$
-\theta''_0(\rho)+f'(\theta_0(\rho))=0,
$$
together with $\theta_0(0)=0$ and $\theta_0(\rho)\to\pm 1$ as $\rho\to\pm\infty$.

According to the properties on the above second order ordinary differential equation, we know that $\theta_0$ also satisfies
\begin{eqnarray}
|\partial^m_\rho(\theta_0(\rho)\mp1)|=O(e^{-\alpha|\rho|}) \quad \text{for } \text{ all } \rho \in \mathbb{R},
\end{eqnarray}
where $\alpha \text{ = min}(\sqrt{f''(-1)},\sqrt{f''(1)})$. Then, the coefficient $\sigma$ is given by
$\sigma=\int_{\mathbb{R}} \theta'_0(\rho)^2 \mathrm{d}\rho$.

To state the main theorem, it is helpful to introduce the approximate solution constructed in this paper roughly here, which will be constructed by using the two-scale matched asymptotical expansion method in Subsection 3.1. The approximate solution, denoted by $(\mathbf{{\tilde v}}_A, c_A)$, will act as a bridge between the solutions to (\ref{9.6.4})-(\ref{9.6.5}) and the solutions to (\ref{11.8.1})-(\ref{11.8.2}). To this end, we introduce the following notations. For $\delta>0$, and $t\in [0, T_0]$, we define the tubular neighborhoods of $\Gamma_t$,
\begin{align*}
\Gamma_t(\delta)\triangleq \{y\in \Omega: \hbox{dist}(y, \Gamma_t)<\delta\},\quad \Gamma(\delta)=\bigcup_{t\in [0, T_0]}\Gamma_t(\delta)\times\{t\},
\end{align*}
and the signed distance function
$$
d_{\Gamma}(x, t)\triangleq \operatorname{sdist}(\Gamma_{t}, x)=
\begin{cases}
\operatorname{dist}\left(\Omega^{-}(t), x\right) & \text { if } x \notin \Omega^{-}(t), \\
-\operatorname{dist}\left(\Omega^{+}(t), x\right) & \text { if } x \in \Omega^{-}(t).\end{cases}
$$
Let $\zeta(s)\in C^\infty(\mathbb{R})$ be a cut-off function, which is defined as follows.
\begin{align*}
\zeta(s)=1\;\hbox{ for }\; |s|\leq\delta;\quad \zeta(s)=0\;\hbox{ for }\; |s|>2\delta;\quad 0\leq -s\zeta'(s)
\leq 4\;\hbox{ for }\;\delta\leq |s|\leq 2\delta.
\end{align*}
Then the approximate solution $(\mathbf{\tilde{v}}_A, c_{A})$ constructed in this paper takes the following form, also refer to \cite{Liu Y N}.
\begin{eqnarray*}
\begin{split}
\mathbf{v}_A^{in}(\rho,x,t)&=\mathbf{v}_0(\rho,x,t)+\varepsilon \mathbf{v}_1(\rho,x,t)+\varepsilon^2 \mathbf{v}_2(\rho,x,t),\;\; \mathbf{v}_A^{\pm}(x,t)=\mathbf{v}_0^{\pm}(x,t)+\varepsilon \mathbf{v}_1^{\pm}(x,t)+\varepsilon^2 \mathbf{v}_2^{\pm}(x,t),\\
\mathbf{v}_A(x,t)&=\zeta \circ d_\Gamma \mathbf{v}_A^{in}(\rho,x,t)+(1-\zeta \circ d_\Gamma)(\mathbf{v}_A^+(x,t)\chi_+ +\mathbf{v}_A^-(x,t)\chi_-),\quad \mathbf{\tilde{v}}_A=\mathbf{v}_A+\varepsilon^2 \,\mathbf{f},\\
c^{i n}(x, t)&= c_{0}^{i n}(x, t)+\varepsilon^{2} c_{2}^{i n}(x, t)+\varepsilon^{3} c_{3}^{i n}(x, t),\;\;c_{A}(x, t) =\zeta \circ d_{\Gamma} c^{i n}(x, t)+(1-\zeta \circ d_{\Gamma})(\chi_{+}-\chi_{-}),
\end{split}
\end{eqnarray*}
with the stretched variable
\begin{align*}
\rho=\frac{d_{\Gamma(x, t)}}{\varepsilon}-h_1(S(x,t), t)-\varepsilon h_{2,\varepsilon}(S(x, t), t),
\end{align*}
where $h_1(S(x,t), t)$ and $h_{2,\varepsilon}(S(x, t), t)$ are two functions to be determined later.

Now, we state the main theorem in this paper.
\begin{thm}   \label{theorem:thm1.1}
Let $(\mathbf{v}_{\varepsilon}, c_{\varepsilon})$ be a smooth solution of (\ref{9.6.4})-(\ref{9.6.5}) for some $T_{0}>0$.
For each $\varepsilon\in(0, 1]$, there exists a smooth pair $(\mathbf{{\tilde v}}_A, c_A): \Omega\times\Omega\to\mathbb{R}^2\times\mathbb{R}$.
%satisfies
%\begin{eqnarray}
%\begin{split}
%&\partial_t\mathbf{{\tilde v}}_A+\mathbf{{\tilde v}}_A \nabla \mathbf{{\tilde v}}_A-\Delta \mathbf{{\tilde v}}_A+\nabla p_A+\varepsilon \,\operatorname{div}(\nabla c_A \otimes \nabla c_A)\\
%=&\,(\zeta \circ d_\Gamma)[\partial_t \, \mathbf{v}_A^{in}+\mathbf{v}_A^{in} \cdot \nabla \mathbf{v}_A^{in}-\Delta \mathbf{v}_A^{in}+\nabla p_A^{in}+\varepsilon \,\operatorname{div}(\nabla c_0^{in}\otimes \nabla c_0^{in})]\\
%&+O(\varepsilon^2) \quad \text{in} \,L^{\infty}\left(0, T_0; L^{2}(\Omega)\right)
%\end{split}
%\end{eqnarray}
%and
%\begin{eqnarray}
%\partial_t \, c_A+\mathbf{\tilde{v}}_A \cdot \nabla c_A-\Delta c_A+\frac{1}{\varepsilon^2}f'(c_A)+\mathbf{w}|_\Gamma \, \nabla c_A=\mathcal{C}.
%\end{eqnarray}
Moreover, the initial data satisfy
\begin{eqnarray}
\begin{split}\label{11.3.1}
&\|\mathbf{v}_{0,\varepsilon}-\mathbf{{\tilde v}}_A(0)\|_{L^2(\Omega)}^2+\frac{1}{\varepsilon}\|c_{0, \varepsilon}-c_{A}(0)\|_{L^2(\Omega)}^2
+\|c_{0, \varepsilon}-c_{A}(0)\|_{L^4(\Omega)}^4\\
& \quad +\varepsilon^2 \|\nabla (c_{0, \varepsilon}-c_{A}(0))\|_{L^2(\Omega)}^2+\|\{c_{A}(0)(c_{0, \varepsilon}-c_{A}(0))\}\|_{L^2(\Omega)}^2 \\
&\quad + \|\{c_{A}(0)(c_{0, \varepsilon}-c_{A}(0))^3\}\|_{L^1(\Omega)} \le C\varepsilon^4\quad\mbox{ for all }\varepsilon \in(0,1].
\end{split}
\end{eqnarray}
 Then, there are constants $\varepsilon_{0} \in(0,1]$, $R>0$ and $T\in(0, T_{0}]$, such that
\begin{eqnarray}  \label{9.6.1}
\|\mathbf{v}_{\varepsilon}-\mathbf{{\tilde v}}_A\|_{L^{\infty}(0, T; L^{2}(\Omega))}
+ \|\mathbf{v}_{\varepsilon}-\mathbf{{\tilde v}}_A\|_{ L^{2}(0, T ; H^{1}(\Omega))} \leq R\, \varepsilon^{2},
\end{eqnarray}
and
\begin{equation}
\sup_{0 \le t \le T}\|c_{\varepsilon}(t)-c_{A}(t)\|_{L^2(\Omega)}+\|\nabla_\tau (c_{\varepsilon}-c_{A})\|_{L^2(\Omega\times(0,T))}
+\varepsilon  \|\nabla (c_{\varepsilon}-c_{A})\|_{L^2(\Omega\times(0,T))}\le R\,\varepsilon^{\frac{5}{2}},
\end{equation}
\begin{equation}
\sup_{0 \le t \le T} \|\nabla \left(c_{\varepsilon}(t)-c_{A}(t)\right)\|_{L^2(\Omega)}+\varepsilon^{\frac{1}{2}} \|\nabla^2 (c_{\varepsilon}-c_{A})\|_{L^2(\Omega\times(0,T))} \le  R\,\varepsilon \label{11.3.2}
\end{equation}
for all $\varepsilon \in (0,\varepsilon_0]$. Moreover,
\begin{eqnarray}
\begin{split} \label{3.18.1}
&\sup_{0\le t\le T}\| c_{\varepsilon}(t)-c_{A}(t)\|_{L^4(\Omega)}^2 +\sup_{0\le t\le T}\| c_{A}(t)(c_{\varepsilon}-c_{A})(t)\|_{L^2(\Omega)}
  + \|\mu_{\varepsilon}-\mu_A\|_{L^2(\Omega\times(0,t))}  \\
& \quad +\varepsilon^{-\frac{1}{2}} \| f'(c_\varepsilon)-f'(c_A)\|_{L^2(\Omega\times(0,t))} \leq  R \varepsilon^2
\quad\; \mbox{ for all }\varepsilon \in (0,\varepsilon_0],
\end{split}
\end{eqnarray}
where $\mu_{\varepsilon}$ is defined in (\ref{3.18.2}) and $\mu_A=-\varepsilon \Delta c_A+\frac{1}{\varepsilon} f^{\prime}(c_A)$.

Furthermore,
$$
\lim _{\varepsilon \rightarrow 0} c_{A}=\pm 1 \quad \text{ uniformly on compact subsets of } \Omega^{\pm},
$$
and
$$
\mathbf{{\tilde v}}_{A}=\mathbf{v}+O(\varepsilon) \quad \text { in } L^{\infty}(\Omega \times(0, T)) \text { as } \varepsilon \rightarrow 0.
$$
In particular, the above results imply that
$$
c_{\varepsilon} \rightarrow \pm 1 \quad \text { in } L_{\mathrm{loc}}^{2}( \Omega^{\pm}).
$$
\end{thm}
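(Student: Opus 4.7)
The plan is to work with the error functions $u_\varepsilon := \mathbf{v}_\varepsilon - \tilde{\mathbf{v}}_A$ and $\phi_\varepsilon := c_\varepsilon - c_A$, derive the system of equations that they satisfy by subtracting \eqref{9.6.4}--\eqref{10.19.1} from the equations satisfied by the approximate solution, and then close a suitable nonlinear Gronwall inequality by combining energy estimates with the spectral estimate of the linearized Allen--Cahn operator around $c_A$. Because $(\tilde{\mathbf{v}}_A, c_A)$ is constructed from a two-scale matched asymptotic expansion truncated at a sufficiently high order, the residuals $R_v^\varepsilon$ and $R_c^\varepsilon$ produced on the right-hand sides will be of order $\varepsilon^K$ for $K$ large enough (after integration against the test functions), which is the source of the $\varepsilon^2$ factor on the right of \eqref{9.6.1}--\eqref{11.3.2}. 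The proof is then by a continuation argument on the maximal time interval $[0,T]\subset[0,T_0]$ on which a bootstrap ansatz of exactly the size of the claimed estimates holds.

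First, I would carry out the basic energy estimate. Testing the error momentum equation against $u_\varepsilon$, the error phase equation against $\mu_\varepsilon-\mu_A$, and the identity $\partial_t \phi_\varepsilon = \ldots$ against $-\varepsilon\Delta\phi_\varepsilon + \varepsilon^{-1}(f'(c_\varepsilon)-f'(c_A))$, and then adding them with appropriate weights, the transport terms $\mathbf{v}_\varepsilon\nabla u_\varepsilon$ and $\mathbf{v}_\varepsilon\cdot\nabla\phi_\varepsilon$ vanish by the divergence-free condition, while the dangerous capillary coupling $-\varepsilon\,\mathrm{div}(\nabla c_\varepsilon\otimes\nabla c_\varepsilon - \nabla c_A\otimes\nabla c_A)$ in the momentum balance cancels against the work done by $u_\varepsilon\cdot\nabla c_A$ in the phase equation. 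This yields a differential inequality of the form
\begin{equation*}
\tfrac{d}{dt}\mathcal{E}(t) + \tfrac{1}{2}\|\nabla u_\varepsilon\|_{L^2}^2 + \varepsilon\|\nabla\phi_\varepsilon\|_{L^2}^2 + \tfrac{1}{\varepsilon}\|\mu_\varepsilon-\mu_A\|_{L^2}^2 \le C\,\mathcal{E}(t) + \mathcal{N}(t) + C\varepsilon^{2K},
\end{equation*}
where $\mathcal{E}$ controls $\|u_\varepsilon\|_{L^2}^2 + \varepsilon\|\nabla\phi_\varepsilon\|_{L^2}^2 + \varepsilon^{-1}\|\phi_\varepsilon\|_{L^2}^2$ and $\mathcal{N}(t)$ gathers the cubic and quartic remainders from $f'(c_\varepsilon)-f'(c_A) = f''(c_A)\phi_\varepsilon + \tfrac{1}{2}f'''(c_A)\phi_\varepsilon^2 + O(\phi_\varepsilon^3)$.

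The main obstacle, and the decisive step, is the presence of the term $\varepsilon^{-2}\int_\Omega f''(c_A)\phi_\varepsilon^2\,dx$, whose sign is \emph{negative} in a neighbourhood of the diffuse interface where $c_A\approx 0$, so naive energy arguments fail. This is controlled by invoking the sharp spectral estimate for the operator $-\Delta + \varepsilon^{-2}f''(c_A)$ developed in the line of Chen, Alikakos--Fusco and Abels--Liu: along the approximate profile one has
\begin{equation*}
\int_\Omega\bigl(\varepsilon|\nabla\phi_\varepsilon|^2 + \tfrac{1}{\varepsilon}f''(c_A)\phi_\varepsilon^2\bigr)\,dx \;\ge\; -C\,\tfrac{1}{\varepsilon}\|\phi_\varepsilon\|_{L^2}^2 + c_0\,\|\nabla_\tau\phi_\varepsilon\|_{L^2}^2,
\end{equation*}
which absorbs the bad term against the dissipation $\varepsilon^{-1}\|\mu_\varepsilon-\mu_A\|_{L^2}^2$ produced above, at the cost of a lower-order term that Gronwall can swallow; this is precisely what is needed to obtain the $L^2$-bound on $\phi_\varepsilon$ of order $\varepsilon^{5/2}$ together with the tangential $H^1$ gain. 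The $L^4$ and the weighted norm $\|c_A\phi_\varepsilon\|_{L^2}$ are then recovered by carrying the cubic remainder $\phi_\varepsilon^3$ in $\mathcal{N}(t)$ through the inequality $f(c_\varepsilon)-f(c_A)-f'(c_A)\phi_\varepsilon\gtrsim c_A^2\phi_\varepsilon^2 + \phi_\varepsilon^4$ induced by the double-well structure \eqref{PF}.

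Finally, to upgrade to the full gradient and Hessian bounds \eqref{11.3.2}, I would test the phase equation against $-\Delta\phi_\varepsilon$ (and against $\partial_t\phi_\varepsilon$ where needed), using the already-obtained $L^\infty L^2$ bound on $\phi_\varepsilon$ and $L^2 L^2$ bound on $\mu_\varepsilon-\mu_A$ to dominate the worst interior nonlinearity. Combining all the estimates via Gronwall's lemma and the bootstrap assumption, the closed inequality reads $\mathcal{E}(t)+\text{dissipation}\le C\varepsilon^{2K} + C\int_0^t \mathcal{E}(s)\,ds$ with $K\ge 2$, which, thanks to the initial hypothesis \eqref{11.3.1}, gives $\mathcal{E}(t)\le C\varepsilon^4$ on $[0,T]$ and strictly improves the bootstrap ansatz, so $T$ can be extended up to a uniform $T>0$ independent of $\varepsilon$. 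The convergence statements at the end of the theorem then follow by passing $\varepsilon\to 0$ in the quantitative bounds together with the pointwise convergence properties of the profile $\theta_0$ and the cutoff construction of $c_A$.
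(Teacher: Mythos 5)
Your outline is on the right track at the macroscopic level: work with the error pair $(\mathbf{w},u)=(\mathbf{v}_\varepsilon-\tilde{\mathbf{v}}_A,c_\varepsilon-c_A)$, derive the coupled error system, run energy estimates, control the interfacial term $\varepsilon^{-2}f''(c_A)u^2$ with the spectral bound of Proposition \ref{theorem:thm2.10}, and close with a bootstrap/Gronwall argument. You even write down the paper's key multiplier $\mu_\varepsilon-\mu_A$. But the proposal then muddles precisely the part that carries the paper's contribution, and this would cost you the rate claimed in \eqref{11.3.2}.

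The critical issue is in your last paragraph: you propose to get the gradient and Hessian bounds by \emph{also} testing the phase error equation against $-\Delta u$ (and $\partial_t u$). That is exactly the Abels--Fei route. It forces you to estimate $\varepsilon^{-2}\int[f'(c_\varepsilon)-f'(c_A)]\Delta u$, whose $\varepsilon^{-2}$ factor caps the result at $\|\nabla u\|_{L^\infty(0,T;L^2)}=O(\varepsilon^{1/2})$, short by a factor $\varepsilon^{1/2}$ of what \eqref{11.3.2} asserts. The paper's Proposition \ref{P3.3} deliberately multiplies by $\varepsilon(\mu_\varepsilon-\mu_A)$ \emph{instead of} $\Delta u$: since $\mu_\varepsilon-\mu_A=-\varepsilon\Delta u+\varepsilon^{-1}[f'(c_\varepsilon)-f'(c_A)]$, the time-derivative pairing $\varepsilon\int\partial_t u\,(\mu_\varepsilon-\mu_A)$ already produces $\frac{d}{dt}\frac{\varepsilon^2}{2}\|\nabla u\|_{L^2}^2$, and the singular term becomes $\varepsilon^{-2}[f'(c_\varepsilon)-f'(c_A)]\partial_t u$, not $\Delta u$, which only carries a factor $\varepsilon^{-1}$ after the explicit expansion $f'(s)=(s^3-s)/2$ and a time integration by parts. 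This expansion is also what yields the exact total time derivatives of $\|u\|_{L^4}^4$, $\|c_A u\|_{L^2}^2$ and $\|c_A u^3\|_{L^1}$ appearing in \eqref{3.18.1} and in the initial-data hypothesis \eqref{11.3.1}. Your alternative suggestion to recover those quantities from a convexity inequality $f(c_\varepsilon)-f(c_A)-f'(c_A)u\gtrsim c_A^2u^2+u^4$ is not what drives the estimate here, since the term that needs to be integrated in time is $\partial_t u\,(f'(c_\varepsilon)-f'(c_A))$, not a pointwise energy density, and the required structure is revealed only after writing it as a total time derivative plus commutator terms in $\partial_t c_A$.

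Two further points worth flagging. First, the claimed cancellation of the capillary term $-\varepsilon\operatorname{div}(\nabla c_\varepsilon\otimes\nabla c_\varepsilon-\nabla c_A\otimes\nabla c_A)$ against $\mathbf{w}\cdot\nabla c_A$ in the phase equation is only partial: after replacing $-\varepsilon\Delta c$ by $\mu-\varepsilon^{-1}f'(c)$ and using $\operatorname{div}\mathbf{w}=0$, you are left with a genuinely nonlinear residual of the type $\int\mu_\varepsilon\,\nabla u\cdot\mathbf{w}$ that still needs to be bounded through the bootstrap. The paper does not rely on this cancellation at all in Proposition \ref{P3.1}; it bounds the term $(\nabla u\otimes\nabla u):\nabla\mathbf{w}$ directly, exploiting the decomposition $\nabla u=\nabla_\tau u+\mathbf{n}\partial_{\mathbf{n}}u$ and the geometric identity $\mathbf{n}\otimes\mathbf{n}:\nabla\mathbf{w}=-\operatorname{div}_\tau\mathbf{w}$ coming from $\operatorname{div}\mathbf{w}=0$. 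Second, stating that the residuals of the approximate solution are ``$O(\varepsilon^K)$ with $K$ large enough'' misses the actual constraint: the expansion here terminates at a specific finite order (the remainders have concrete sizes as in \eqref{4.1.1}, \eqref{11.16.1}, \eqref{12.21.1}--\eqref{9.10.1} and \eqref{4.12.1}), and it is precisely the balance between these orders and the multiplier choice that yields the stated $O(\varepsilon^2)$, $O(\varepsilon^{5/2})$ and $O(\varepsilon)$ rates.
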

%With Gagliardo-Nirenberg inequality, we note the following important special case.
%\begin{cor}
%Under the hypotheses of Theorem \ref{theorem:thm1.1}, we actually have
%\begin{eqnarray}   \label{9.3.1}
%\|\mathbf{v}_{\varepsilon}-\mathbf{{\tilde v}}_A\|_{L^{2}(0, T ; L^{\infty}(\Omega))} \leq R\, \varepsilon^{2}.
%\end{eqnarray}
%\end{cor}
Before proceeding, let us explain the main proof ideas in this paper. First, the construction of the approximate solutions
$\mathbf{{\tilde v}}_{A}$ and $c_{A}$ ensures that $\mathbf{{\tilde v}}_{A}$ and $c_{A}$ converge to $\mathbf{v}$ and $\pm 1$ respectively,
as $\varepsilon$ tends to 0. Then, it suffices to prove (\ref{9.6.1})-(\ref{3.18.1}). We should point out here that
 Abels and Liu recently studied the sharp interface limit for a Stokes/Allen-Cahn coupled system in \cite{Liu Y N}, where
 they established the convergence of $c_\varepsilon$ in the $L^\infty(0, T_0; L^2(\Omega))$ sense and
 the convergence of $\mathbf{v}_{\varepsilon}$ in the $L^2(0, T_0; L^q(\Omega))\ (q\in [1,2))$ sense with well-prepared initial data.
 Here for the Navier-Stokes/Allen-Cahn coupled system, we obtain the $L^\infty(0, T_0; H^1(\Omega))\cap L^2(0, T_0; H^2(\Omega))$ convergence for $c_\varepsilon$ and the $L^\infty(0, T_0; L^2(\Omega))\cap L^2(0, T_0; H^1(\Omega))$ convergence for $\mathbf{v}_{\varepsilon}$.

As mentioned above, this paper contains two main parts: In the first part, we construct the high-order approximate solution
$(\mathbf{{\tilde v}}_{A}, c_{A})$, which solves the original problem (\ref{9.6.4})-(\ref{9.6.5}) with the high order error terms
with respect to $\varepsilon$. Following the arguments in \cite{Liu Y N}, the approximate solution in this paper can be constructed similarly.
Here we require that the approximate solution $\mathbf{{\tilde v}}_{A}$ satisfies the divergence-free condition,
which implies the error function of the velocity also satisfies the same divergence-free condition.
In the second part, the error terms between the exact solution and approximate solution are estimated.
It should be remarked that the most arguments in deriving the $L^2$ estimates of the error functions
are similar to those in \cite{Liu Y N} in some sense. However, to control $\|\nabla u\|^2_{L^2(\Omega\times(0,t))}$,
we shall employ a (slightly) different argument by noticing
$|\nabla u|^2+\frac{1}{\varepsilon^2} f''(c_A) u^2 =(1-\varepsilon^2)(|\nabla u|^2
+\frac{1}{\varepsilon^2} f''(c_A) u^2)+\varepsilon^2 (|\nabla u|^2+\frac{1}{\varepsilon^2} f''(c_A) u^2)$ with $u=c_\varepsilon-c_A$.

In order to derive the estimates of the error functions, the spectrum estimate of the linearized Allen-Cahn operator $\mathcal{L}_\varepsilon=-\Delta+\varepsilon^{-2}f''(c_A)$ is essentially used. It is emphasized that this method was originally used
to study the Allen-Cahn equation in \cite{Chen X}, and also used to the Cahn-Hilliard equation in \cite{Abels}.
To overcome the difficulty caused by the capillary term $\operatorname{div} (\nabla c_{\varepsilon} \otimes \nabla c_{\varepsilon})$,
it is necessary to derive the estimates of the derivatives. However, noticing that there are no corresponding spectral estimates for the second derivatives, we have to handle the singular term $\frac{1}{\varepsilon^{2}}[f'(c_\varepsilon)-f'(c_A)]$.
To this end, Abels and Fei multiplied the equation of the error function for $u$ by $\Delta u$,
and then close the estimates by H\"{o}lder's inequality in a very recent work \cite{Fei},
which will produce a singular factor of $\varepsilon^{-2}$ in estimating this term. However, we come up with a new multiplier of $\mu_{\varepsilon}-\mu_A=-\varepsilon \Delta c_{\varepsilon}+\frac{1}{\varepsilon} f^{\prime}(c_{\varepsilon})+\varepsilon \Delta c_{A}-\frac{1}{\varepsilon} f^{\prime}(c_{A})$ for the error estimate of derivatives. In this way, we do not need to control the term  $\frac{1}{\varepsilon^{2}}[f'(c_\varepsilon)-f'(c_A)] \Delta u$ as in \cite{Fei}. It converts to estimating the term
$\frac{1}{\varepsilon^{2}}[f'(c_\varepsilon)-f'(c_A)] \partial_t u$. Intuitively, $\Delta u$ leads to a singular factor of $\varepsilon^{-2}$,
while $\partial_t u$ only produces $\varepsilon^{-1}$. This will improve the estimate of the error function $\nabla (c_{\varepsilon}-c_A)$
by $\varepsilon^{\frac{1}{2}}$. That is, $\|\nabla \left(c_{\varepsilon}(t)-c_{A}(t)\right)\|_{L^\infty(0,T;L^2(\Omega))}$ is of order $O(\varepsilon)$ stated in (\ref{11.3.2}). This is one of the key observations in this paper.
Moreover, by using this multiplier, some by-products can also be obtained in this paper,
which are listed in the main theorem. We believe that the optimal convergence rate should be $O(\varepsilon^{3/2})$.
However, we do not know how to achieve this optimal rate at this moment, which is left for the future study.

This paper is organized as follows: In Section 2, we give some symbols and elemetary lemmas to be used later. Section 3 is devoted to the construction of the approximate solution which contains the inner and outer expansions, and to the derivation of
the corresponding estimates for the error functions. Finally, in Section 4 we establish the a priori estimates
to complete the proof of the main theorem.

\section{Preliminaries}

For every point $x \in \Gamma_{t}\ (t \in[0, T_{0}])$, there is a local diffeomorphisms $X_{0}: \mathbb{T}^{1} \times [0, T_{0}] \rightarrow \Omega$, such that the normalized tangential vector on $\Gamma_{t}$ at $X_{0}(s, t)$ is described by
$$
\tau(s, t)=\frac{\partial_{s} X_{0}(s, t)}{|\partial_{s} X_{0}(s, t)|}.
$$
Moreover, the outer unit normal vector of interior boundary for $\Omega^{-}(t)$ is denoted as
$$
\mathbf{n}(s, t)=\left(\begin{array}{cc}
0 & -1 \\
1 & 0
\end{array}\right) \tau(s, t), \quad \text { for all }(s, t) \in \mathbb{T}^{1} \times[0, T_{0}].
$$
For convenience, we set
$$
\mathbf{n}_{\Gamma_{t}}(x)\triangleq \mathbf{n}(s, t), \quad \text { for all } x=X_{0}(s, t) \in \Gamma_{t}.
$$
Let $V_{\Gamma_{t}}$ and $H_{\Gamma_{t}}$ be the normal velocity and the (mean) curvature of $\Gamma_{t}$ (with respect to $\mathbf{n}$ ).
By virtue of the definition,
$$
V_{\Gamma_{t}}\left(X_{0}(s, t)\right)=V(s, t)=\partial_{t} X_{0}(s, t) \cdot \mathbf{n}(s, t) \quad \text { for all }(s, t) \in \mathbb{T}^{1} \times [0, T_{0}].
$$
We choose $\delta$ so small that $\operatorname{dist}(\partial \Omega, \Gamma_{t})> 3 \delta$. Then, every
$$
x \in \Gamma_{t}(3 \delta)=\{x \in \Omega: \operatorname{dist}(x, \Gamma_{t})< 3 \delta\},
$$
can be uniquely represented by
$$
x=P_{\Gamma_{t}}(x)+r \mathbf{n}_{\Gamma_{t}}\left(P_{\Gamma_{t}}(x)\right),\quad \text{ where } r=\operatorname{sdist}(\Gamma_{t}, x).
$$

Consider the mapping
$$
X:(-3 \delta, 3 \delta) \times \mathbb{T}^{1} \times[0, T_{0}] \mapsto \Gamma(3 \delta) \text { by } X(r, s, t)\triangleq X_{0}(s, t)+r \mathbf{n}(s, t),
$$
so that $(r, s, t)$ are coordinates in $\Gamma(3 \delta)$, and let
\begin{eqnarray}  \label{9.10.3}
r=\operatorname{sdist}(\Gamma_{t}, x), \quad s=X_{0}^{-1}\left(P_{\Gamma_{t}}(x), t\right)\triangleq S(x, t)
\end{eqnarray}
be its inverse.

Such coordinates are more convenient for the calculations that follow. For instance, noting that
$d_{\Gamma}\left(X_{0}(s, t)+r \mathbf{n}(s, t),\ t\right)=r$, we see that its derivative along $r$ leads to
$$\nabla d_{\Gamma}\left(X_{0}(s, t)+r \mathbf{n}(s, t), t\right)\cdot \mathbf{n}=1, $$
which implies that
\begin{eqnarray} \label{9.10.2}
\nabla d_{\Gamma}(x, t)=\mathbf{n}_{\Gamma_{t}}\left(P_{\Gamma_{t}}(x)\right),\quad \partial_{t} d_{\Gamma}(x, t)=-V_{\Gamma_{t}}\left(P_{\Gamma_{t}}(x)\right),\quad \Delta d_{\Gamma}(p, t)=-H_{\Gamma_{t}}(p),
\end{eqnarray}
for all $(x,t)\in \Gamma(3 \delta)$ and $(p,t)\in \Gamma$ (cf. Section 4.1 in \cite{Chen X}).

Denoting
\begin{eqnarray*}
&\partial_{\tau} u(x, t)\triangleq \tau(S(x, t), t) \nabla u(x, t),
\quad \nabla_{\tau} u(x, t)\triangleq \partial_{\tau} u(x, t) \tau(S(x, t), t), \quad\; (x, t) \in \Gamma(3 \delta),
\end{eqnarray*}
we have
\begin{eqnarray}   \label{9.1.10}
\nabla_{\tau}=(I-\mathbf{n}(S(\cdot), \cdot) \otimes \mathbf{n}(S(\cdot), \cdot)) \nabla.
\end{eqnarray}
Since $\partial_{\mathbf{n}}(I-\mathbf{n} \otimes \mathbf{n})=0$, we find that
\begin{align*}
\left[\partial_{\mathbf{n}}, \nabla_{\tau}\right] g &\triangleq \partial_{\mathbf{n}}((I-\mathbf{n} \otimes \mathbf{n})
\nabla g)-(I-\mathbf{n} \otimes \mathbf{n}) \nabla\left(\partial_{\mathbf{n}} g\right) \\
&=(I-\mathbf{n} \otimes \mathbf{n}) \partial_{\mathbf{n}} \nabla g-(I-\mathbf{n} \otimes \mathbf{n}) \nabla(\mathbf{n} \cdot \nabla g) \\
&=-\sum_{j=1}^{2}\left((I-\mathbf{n} \otimes \mathbf{n}) \nabla \mathbf{n}_{j}\right) \partial_{x_{j}} g
=-\tau\left(\partial_{\tau} \mathbf{n} \cdot \nabla g\right),
\end{align*}
which shows that the commutator $\left[\partial_{\mathbf{n}},\nabla_{\tau}\right]$ is in fact a tangential differential operator
(cf. Section 2.2 in \cite{Liu Y N}).

In this paper, we shall identify a function $\omega(x, t)$ with $\tilde{\omega}(r, s, t)$, such that
$$
\omega(x, t)=\tilde{\omega}\left(d_{\Gamma}(x, t), S(x, t), t\right),\;\; \text { namely }  \omega\left(X_{0}(s, t)+r \mathbf{n}(s, t), t\right)=\tilde{\omega}(r, s, t). $$
By using the chain rule together with (\ref{9.10.2}), we have the following formula
\begin{eqnarray}
\begin{split}  \label{9.11.1}
&\partial_{t} \omega(x, t)=-V_{\Gamma_{t}}\left(P_{\Gamma_{t}}(x)\right) \partial_{r} \tilde{\omega}(r, s, t)
+\partial_{t}^{\Gamma} \tilde{\omega}(r, s, t), \\
&\nabla \omega(x, t)=\mathbf{n}_{\Gamma_{t}}\left(P_{\Gamma_{t}}(x)\right) \partial_{r} \tilde{\omega}(r, s, t)
+\nabla^{\Gamma} \tilde{\omega}(r, s, t), \\
&\Delta \omega(x, t)=\partial_{r}^{2} \tilde{\omega}(r, s, t)+\Delta d_{\Gamma_{t}}(x) \partial_{r} \tilde{\omega}(r, s, t)
+\Delta^{\Gamma} \tilde{\omega}(r, s, t),
\end{split}
\end{eqnarray}
where $r, s$ are defined by (\ref{9.10.3}), and
\begin{eqnarray}
\begin{split}  \label{9.12.1}
\partial_{t}^{\Gamma}\tilde{\omega}(r,s,t) &=\partial_{t} \tilde{\omega}(r,s,t)+\partial_{t} S(x,t)\partial_{s}\tilde{\omega}(r,s,t), \\
\nabla^{\Gamma} \tilde{\omega}(r, s, t) &=(\nabla S)(x, t) \partial_{s} \tilde{\omega}(r, s, t), \\
\Delta^{\Gamma} \tilde{\omega}(r, s, t) &=|(\nabla S)(x, t)|^{2} \partial_{s}^{2} \tilde{\omega}(r, s, t)
+(\Delta S)(x, t) \partial_{s} \tilde{\omega}(r, s, t).
\end{split}
\end{eqnarray}
When $\Gamma$ is smooth enough, then $|\nabla S|\le C$, cf. Section 4.1 in \cite{Chen X}.

As in the previous construction, the leading term $c^{in}_0(x,t)\triangleq \theta_0(\rho)$ of $c_A$ is a function of the stretched
variable $\rho$, which is defined as follows.
\begin{eqnarray}  \label{9.11.2}
\rho(x, t) \triangleq \frac{d_{\Gamma}(x, t)}{\varepsilon}-h_{\varepsilon}(S(x, t), t), \quad h_{\varepsilon}(s, t) \triangleq h_{1}(s, t)+\varepsilon h_{2}(s, t).
\end{eqnarray}
The reason why the factor $h_{\varepsilon}$ is introduced in the definition is to circumvent the obstacles and difficulties caused
by the error of $c_\varepsilon$, which will be discussed later.

Set
$$\|\psi\|_{L^{p, \infty}\left(\Gamma_{t}(2 \delta)\right)}\triangleq \left(\int_{\mathbb{T}^{1}} \operatorname{ess} \sup _{|r| \leq 2 \delta}\left|\psi \left(X_{0}(s, t)+r \mathbf{n}(s, t)\right)\right|^{p} \mathrm{~d} s\right)^{\frac{1}{p}}.
$$

Denote the function space
\begin{eqnarray}   \label{9.6.3}
X_{T}\triangleq L^{2}\left(0, T ; H^{5 / 2}(\mathbb{T}^{1})\right) \cap H^{1}\left(0, T ; H^{1 / 2}(\mathbb{T}^{1})\right)
\end{eqnarray}
equipped with the following norm
$$
\|u\|_{X_{T}}=\|u\|_{L^{2}(0, T ; H^{5 / 2}(\mathbb{T}^{1}))}+\|u\|_{H^{1}(0, T ; H^{1 / 2}(\mathbb{T}^{1}))}+\|u|_{t=0}\|_{H^{3 / 2}(\mathbb{T}^{1})}.
$$
Recalling
\begin{eqnarray}   \label{11.16.2}
X_{T} \hookrightarrow B U C\left([0, T] ; H^{3 / 2}(\mathbb{T}^{1})\right) \cap L^{4}\left(0, T ; H^{2}(\mathbb{T}^{1})\right),
\end{eqnarray}
one sees that the operator norm of the embedding is uniformly bounded in $T$.

Let $X_{T}$ be the function space defined in (\ref{9.6.3}). For $h_{1}$, $h_{2}=h_{2, \varepsilon}$ presented above,
we require the following a priori assumptions:
\begin{eqnarray}  \label{9.6.2}
h_{1} \in C^{\infty}(\mathbb{T}^{1} \times[0, T_{0}]),\;\; \sup _{0<\varepsilon \leq \varepsilon_{0}}\|h_{2, \varepsilon}\|_{X_{T}} \leq M
\;\mbox{ for some }\varepsilon_{0} \in(0,1), M \geq 1,\ T \in (0, T_{0}],
\end{eqnarray}
where we keep in mind that only $h_{2}=h_{2,\varepsilon}$ depends on $\varepsilon$.
The a priori assumptions in (\ref{9.6.2}) will be verified later.

To describe the properties of the leading term $\theta_0$ of $c_A$, which depends on the stretch variable $\rho$,
it is convenient to introduce the following function spaces.
\begin{defn}
 For any $k \in \mathbb{R}$ and $\alpha>0$, $\mathcal{R}_{k, \alpha}$ is the space of functions
 $\hat{r}_{\varepsilon}: \mathbb{R} \times \Gamma(2 \delta) \rightarrow \mathbb{R}$, $\varepsilon \in(0,1)$, such that
$$
|\partial_{\mathbf{n}_{{\Gamma}_{t}}}^{j} \hat{r}_{\varepsilon}(\rho, x, t)| \leq C e^{-\alpha|\rho|}\varepsilon^{k}, \quad \text { for all } \rho \in \mathbb{R},(x, t) \in \Gamma(2 \delta), j=0,1, \varepsilon \in(0,1),
$$
where $C>0$ is some constant independent of $(\rho,x, t)$ and $\varepsilon$, and the equipped norm $\|\cdot\|_{\mathcal{R}_{k,\alpha}}$
can be defined as
$$
\|(\hat{r}_{\varepsilon})_{\varepsilon \in(0,1)}\|_{\mathcal{R}_{k, \alpha}}=\sup _{\varepsilon \in(0,1),(x, t)
\in \Gamma(2 \delta), \rho \in \mathbb{R}, j=0,1}|\partial_{n_{ \Gamma_{t}}}^{j} \hat{r}_{\varepsilon}(\rho, x, t)| e^{\alpha|\rho|}
\varepsilon^{-k}.
$$
Besides, we regard $r_{\varepsilon}(x, t)$ as $\hat{r}_{\varepsilon}\left(\frac{d_{\Gamma}(x, t)}{\varepsilon}-h_{\varepsilon}(S(x, t), t), x, t\right) $ for all $(x, t) \in \Gamma(2 \delta)$. Finally, $(\hat{r}_{\varepsilon})_{\varepsilon \in(0,1)}\in \mathcal{R}_{k, \alpha}^0$ means that $\hat{r}_{\varepsilon}$ have value-zero on $x \in \Gamma_t$ in the usual sense.
\end{defn}

Based on the above definition, we have the following lemma.
\begin{lem}
Under the a priori assumptions (\ref{9.6.2}), set
$M \triangleq \sup _{0<\varepsilon \le \varepsilon_{0},(s, t) \in \mathbb{T}^{1} \times [0, T]}|h_{\varepsilon}(s, t)|<\infty.$
Let $(\hat{r}_{\varepsilon})_{\varepsilon \in(0,1)} \in \mathcal{R}_{k, \alpha}$. Then,
\begin{eqnarray} \label{10.18.2}
\big\|\sup _{(x, t) \in \Gamma(2 \delta)}|\hat{r}_{\varepsilon}(\frac{.}{\varepsilon}, x, t)|\big\|_{L^{2}(\mathbb{R})} \leq C \varepsilon^{k+\frac{1}{2}},
\end{eqnarray}
where the positive constant C is independent of $M$, $T$ and $\varepsilon$.
\end{lem}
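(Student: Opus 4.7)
The plan is direct: the statement is essentially a rescaling identity built on top of the exponential decay that defines the class $\mathcal{R}_{k,\alpha}$. The supremum over $(x,t) \in \Gamma(2\delta)$ that sits inside the $L^2(\mathbb{R})$ norm is harmless, because the pointwise estimate $|\hat r_\varepsilon(\rho,x,t)| \leq C\varepsilon^k e^{-\alpha|\rho|}$ built into the definition is already uniform in $(x,t)$. So there is nothing subtle happening geometrically; only the one-variable decay in $\rho$ matters, and the factor $\varepsilon^{1/2}$ in the conclusion is entirely produced by the $\varepsilon$-rescaling of this decay profile.

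Concretely, denoting by $s \in \mathbb{R}$ the free variable indicated by the dot, I would substitute $\rho = s/\varepsilon$ into the defining estimate and then take the supremum in $(x,t) \in \Gamma(2\delta)$ to obtain
$$
\sup_{(x,t) \in \Gamma(2\delta)} \bigl|\hat r_\varepsilon(s/\varepsilon, x, t)\bigr| \leq C\varepsilon^k e^{-\alpha|s|/\varepsilon}, \qquad s \in \mathbb{R}.
$$
Squaring, integrating over $s \in \mathbb{R}$, and performing the change of variable $\tau = s/\varepsilon$ one finds $\int_{\mathbb{R}} e^{-2\alpha|s|/\varepsilon}\,ds = \varepsilon/\alpha$, so the squared $L^2(\mathbb{R})$ norm is bounded by $C^2 \varepsilon^{2k+1}/\alpha$. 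Extracting the square root yields the asserted bound $C\varepsilon^{k+\frac{1}{2}}$.

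The independence of the constant from $M$ and $T$ is automatic: the pointwise estimate built into $\mathcal{R}_{k,\alpha}$ does not refer to $h_\varepsilon$ at all, and no integration in $t$ is carried out, so neither $M$ (the uniform bound on $h_\varepsilon$) nor $T$ can enter. There is no genuine obstacle in this lemma; it is a mechanical rescaling identity whose role is to serve as a workhorse later, converting the exponentially-decaying estimates that are natural on the stretched variable $\rho$ into $L^2$ bounds on the original spatial scale, which is the form needed to close the error estimates in the subsequent sections.
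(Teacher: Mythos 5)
Your proof is correct and is essentially the same as the paper's: both rely solely on the uniform pointwise bound $|\hat r_\varepsilon(\rho,x,t)| \leq C\varepsilon^k e^{-\alpha|\rho|}$ from the definition of $\mathcal{R}_{k,\alpha}$, square it, integrate over the real line, and pick up the factor $\varepsilon$ by the change of variables $r = \varepsilon z$. Your additional remark that the bound cannot depend on $M$ or $T$, because $h_\varepsilon$ never enters and no $t$-integration occurs, is a correct and harmless elaboration of what the paper leaves implicit.
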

\begin{proof}
In view of the exponential decay properties of $\hat{r}_{\varepsilon}$, we have
$$
\big\|\sup _{(x, t) \in \Gamma(2 \delta)}|\hat{r}_{\varepsilon}(\cdot, x, t)|\big\|_{L^{2}(\mathbb{R})}^{2} \leq C \varepsilon^{2k} \int_{-\infty}^{\infty} e^{-2\alpha |r| / \varepsilon} \mathrm{d} r=C \varepsilon^{2k+1} \int_{-\infty}^{\infty} e^{-2\alpha |z|} \mathrm{d} z=C \varepsilon^{2k +1}.
$$
\end{proof}

\begin{rem} \label{rem11.16.1}
(1) The $L^2$-norm in (\ref{10.18.2}) can be replaced by $L^p$-norm for any $1 \leq p \leq \infty$, and correspondingly, the right hand side will become $\varepsilon^{k+\frac{1}{p}}$.\\
(2) Moreover, if $\left(\hat{r}_{\varepsilon}\right)_{\varepsilon \in(0,1)} \in \mathcal{R}_{k, \alpha}^{0}$, then there is a constant $C>0$, depending on $M$, such that
$$
\big\|\sup _{(x, t) \in \Gamma(2 \delta)}|\hat{r}_{\varepsilon}(\frac{.}{\varepsilon}, x, t)|\big\|_{L^{p}(\mathbb{R})} \leq C(M) \varepsilon^{k+\frac{1}{p}+1} .
$$
\end{rem}

\begin{prop} \label{Proposition:prop1.4}
Suppose $\hat{r}_{\varepsilon} \in \mathcal{R}_{0, \alpha}$ for some $\alpha>0$. Let $j=0$ if $\hat{r}_{\varepsilon} \in \mathcal{R}_{0, \alpha}$
and $j=1$ if $\hat{r}_{\varepsilon} \in \mathcal{R}^0_{0, \alpha}$. Then, under the assumptions (\ref{9.6.2}), there is a $C=C(M)>0$, such that
\begin{align*}
\|a(P_{\Gamma_{t}}(\cdot)) r_{\varepsilon } \varphi\|_{L^{1}(\Gamma_{t}(2 \delta))} \le&\, C\varepsilon^{1+j} \|\varphi\|_{H^{1}(\Omega)}\|a\|_{L^{2}(\Gamma_{t})}, \\
\|a (P_{\Gamma_{t}}(\cdot)) r_{\varepsilon}\|_{L^{2}(\Gamma_{t}(2 \delta))} \le& \,C \varepsilon^{\frac{1}{2}+j}\|a\|_{L^{2}(\Gamma_{t})}
\end{align*}
uniformly for all $\varphi \in H^{1}(\Omega)$ and $a \in L^{2}(\Gamma_{t})$.
\end{prop}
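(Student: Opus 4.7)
The plan is to reduce everything to an integral over the normal-tangential coordinates $(r,s)$ introduced before Proposition \ref{Proposition:prop1.4} and to exploit the exponential decay of $\hat{r}_\varepsilon$ in the stretched variable $\rho=r/\varepsilon-h_\varepsilon(s,t)$. Using the parametrization $x=X(r,s,t)=X_0(s,t)+r\mathbf{n}(s,t)$, the Jacobian is smooth and uniformly bounded on $\Gamma_t(2\delta)$, and $P_{\Gamma_t}(X(r,s,t))=X_0(s,t)$, so $a(P_{\Gamma_t}(x))=a(X_0(s,t))$ is $r$-independent. This coordinate change turns both norms on $\Gamma_t(2\delta)$ into iterated integrals over $\mathbb{T}^1\times(-2\delta,2\delta)$.

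The first preparatory step is a pointwise bound $|r_\varepsilon(x,t)|\le C\varepsilon^{j}e^{-\alpha'|\rho|}$ for some $\alpha'\in(0,\alpha)$. For $j=0$ this is immediate from the definition of $\mathcal{R}_{0,\alpha}$. For $j=1$, i.e.\ $\hat{r}_\varepsilon\in\mathcal{R}_{0,\alpha}^0$, the vanishing of $\hat{r}_\varepsilon$ on $\Gamma_t$ allows one to write $\hat{r}_\varepsilon(\rho,X(r,s,t),t)=\int_0^r \partial_{\mathbf{n}}\hat{r}_\varepsilon(\rho,X(r',s,t),t)\,dr'$; combined with the defining estimate $|\partial_{\mathbf{n}}\hat{r}_\varepsilon|\le Ce^{-\alpha|\rho|}$ and $|r|\le\varepsilon(|\rho|+M)$, the factor $|\rho|+M$ can be absorbed by shrinking the exponent slightly, yielding the claimed bound with an extra $\varepsilon$.

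With this pointwise bound, the $L^2$ estimate is a direct computation: after changing variables and substituting $z=r/\varepsilon-h_\varepsilon(s,t)$ in the inner integral,
\[
\int_{-2\delta}^{2\delta} e^{-2\alpha'|\rho|}\,dr \;=\; \varepsilon\!\int_{\mathbb{R}} e^{-2\alpha'|z|}\,dz \cdot \mathbf{1} \;\le\; C\varepsilon,
\]
so $\|a(P_{\Gamma_t})r_\varepsilon\|_{L^2(\Gamma_t(2\delta))}^2\le C\varepsilon^{2j+1}\|a\|_{L^2(\Gamma_t)}^2$, which is the desired bound.

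For the $L^1$ estimate, the $\varepsilon^{1/2}$ gained from the Cauchy--Schwarz argument for $\varphi$ alone gives only the exponent $j+\tfrac12$, so the extra $\varepsilon^{1/2}$ must come from splitting $\varphi$ via the fundamental theorem of calculus along the normal direction:
\[
\varphi(X(r,s,t)) \;=\; \varphi(X_0(s,t)) + \int_0^r \partial_{\mathbf{n}}\varphi(X(r',s,t),t)\,dr'.
\]
For the trace term, $\int_{-2\delta}^{2\delta}e^{-\alpha'|\rho|}\,dr\le C\varepsilon$ yields a contribution bounded by $C\varepsilon^{j+1}\|a\|_{L^2(\mathbb{T}^1)}\|\varphi\|_{L^2(\Gamma_t)}$, which is absorbed into $\|\varphi\|_{H^1(\Omega)}$ by the trace theorem. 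For the gradient term, Cauchy--Schwarz in $r'$ gives the pointwise estimate $|\int_0^r \partial_{\mathbf{n}}\varphi\,dr'|\le|r|^{1/2}\|\partial_{\mathbf{n}}\varphi\|_{L^2(-2\delta,2\delta)}$; since $|r|^{1/2}e^{-\alpha'|\rho|}\le C\varepsilon^{1/2}(|\rho|+M)^{1/2}e^{-\alpha'|\rho|}$ is integrable in $\rho$ over $\mathbb{R}$, one more change of variables produces $\int_{-2\delta}^{2\delta}|r|^{1/2}e^{-\alpha'|\rho|}\,dr\le C\varepsilon^{3/2}$, and a final Cauchy--Schwarz in $s$ yields $C\varepsilon^{j+3/2}\|a\|_{L^2(\Gamma_t)}\|\partial_{\mathbf{n}}\varphi\|_{L^2(\Omega)}\le C\varepsilon^{j+1}\|a\|_{L^2(\Gamma_t)}\|\varphi\|_{H^1(\Omega)}$. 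Adding the two contributions finishes the proof; the only mildly technical point is the use of the trace inequality and the careful absorption of the $(|\rho|+M)^{1/2}$ factor to keep $C=C(M)$ uniform in $\varepsilon$.
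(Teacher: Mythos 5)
Your proposal is correct, and your $L^2$ estimate and the pointwise bound $|r_\varepsilon|\lesssim \varepsilon^{j}e^{-\alpha'|\rho|}$ (via the fundamental theorem of calculus in the normal direction and absorbing $|\rho|+M$ into a slightly smaller exponent) match what the paper does, the latter being the content of Lemma~2.2 and Remark~2.3(2).

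Where you diverge is the $L^1$ estimate. The paper's proof is a one-line anisotropic H\"older: after passing to $(r,s)$ coordinates it bounds the integral by
$\|a\|_{L^{2,\infty}}\,\|\sup_{x,t}|\hat r_\varepsilon(\tfrac{\cdot}{\varepsilon},x,t)|\|_{L^1(\mathbb R)}\,\|\varphi\|_{L^{2,\infty}(\Gamma_t(2\delta))}$
and then silently invokes the embedding $\|\varphi\|_{L^{2,\infty}(\Gamma_t(2\delta))}\leq C\|\varphi\|_{H^1(\Omega)}$, so the whole $\varepsilon^{1+j}$ gain comes from the $L^1_r$ norm of the decaying profile. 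You instead decompose $\varphi(X(r,s,t))=\varphi(X_0(s,t))+\int_0^r\partial_{\mathbf n}\varphi\,dr'$, handle the trace part via the usual trace theorem $\|\varphi|_{\Gamma_t}\|_{L^2}\lesssim\|\varphi\|_{H^1}$, and handle the gradient part by Cauchy--Schwarz in $r'$, picking up $|r|^{1/2}\sim\varepsilon^{1/2}(|\rho|+M)^{1/2}$ and hence an extra half power of $\varepsilon$ on that piece. This is an elementary re-derivation of the very embedding the paper uses: your FTC + Cauchy--Schwarz argument \emph{is} the standard proof that $H^1(\Gamma_t(2\delta))\hookrightarrow L^{2,\infty}(\Gamma_t(2\delta))$ with a constant uniform in the tube width. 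The trade-off is that your version is more self-contained and makes explicit where the $(|\rho|+M)^{1/2}$ factor and the $M$-dependence of $C$ come from, at the cost of being longer; it also over-delivers on the gradient contribution ($\varepsilon^{j+3/2}$ versus the needed $\varepsilon^{j+1}$), though since the trace contribution is only $\varepsilon^{j+1}$ the final exponent is the same as in the paper.
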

\begin{proof}
By the coordinate transformation $(x, t) \mapsto(r, s, t)$, we obtain
\begin{align*}
&\|a(P_{\Gamma_{t}}(\cdot)) r_{\varepsilon} \varphi\|_{L^{1}(\Gamma_{t}(2 \delta))} \\
=&\;\int_{-2 \delta}^{2 \delta} \int_{\mathbb{T}^{1}}|a(X_{0}(s, t))| \left|\hat{r}_{\varepsilon}\left(\frac{r}{\varepsilon}
-h_{\varepsilon}(s, t), X(r, s, t), t\right)\right|\,|\varphi(X(r, s, t))| J(r, s) \mathrm{d} s \mathrm{~d} r \\
\le&\; C \|a(X_{0}(s, t))\|_{L^{2, \infty}\left(\Gamma_{t}(2 \delta)\right)}\left\|\sup _{(x, t) \in \Gamma(2 \delta)}|\hat{r}_{\varepsilon}(\frac{\cdot}{\varepsilon}, x, t)|\right\|_{L^{1}(\mathbb{R})} \|\varphi(X(r, s, t))
\|_{L^{2, \infty}\left(\Gamma_{t}(2 \delta)\right)} \\
\le & \; C \varepsilon^{1+j} \|\varphi\|_{H^{1}(\Omega)}\|a\|_{L^{2}(\Gamma_{t})}
\end{align*}
for all $a \in L^{2}\left(\Gamma_{t}\right)$ and $\varphi \in H^{1}(\Omega)$, which implies the first inequality.

The second inequality can be gotten as follows, using a straightforward calculation.
\begin{align*}
&\|a(P_{\Gamma_{t}}(\cdot)) r_{\varepsilon}\|_{L^{2}(\Gamma_{t}(2 \delta))} =\left(\int_{-2 \delta}^{2 \delta} \int_{T^{1}}|a(X_{0}(s, t))|^{2} |r_{\varepsilon}(X(r, s, t), t))|^{2} J(r, s) \mathrm{d} s \mathrm{d} r\right)^{\frac{1}{2}} \\
\le& \;C\Big(\|a(X_{0}(s, t))\|^{2}_{L^{2, \infty}\left(\Gamma_{t}(2 \delta)\right)}\Big\|\sup _{(x, t) \in \Gamma(2 \delta)}|\hat{r}_{\varepsilon}(\frac{\cdot}{\varepsilon}, x, t)|\Big\|^{2}_{L^{2}(\mathbb{R})} \Big)^{\frac{1}{2}} \le C \varepsilon^{\frac{1}{2}+j} \|a\|_{L^{2}(\Gamma_{t})}
\end{align*}
for all $a \in L^{2}(\Gamma_{t})$.
\end{proof}

The following Gagliardo-Nirenberg inequality will be used frequently.
\begin{lem}
\label{GN}
(Gagliardo-Nirenberg inequality \cite{Nirenberg}) Let $u$ be a suitable function defined in ${\mathbb R}^n$.
For any $1 \leq q, r \leq \infty$ and a natural number $m$, $\alpha$ and $j$ satisfy
$$
\frac{1}{p}=\frac{j}{n}+\left(\frac{1}{r}-\frac{m}{n}\right) \alpha+\frac{1-\alpha}{q},
\qquad \frac{j}{m} \leq \alpha \leq 1.
$$
Then, there are positive constants $C_1$ and $C_2$ depending only on $m, n, j, q , r$ and $\alpha$, such that
$$
\|\mathrm{D}^{j} u\|_{L^p} \leq C_1\|\mathrm{D}^{m} u\|_{L^r}^{\alpha} \|u\|_{L^{q}}^{1-\alpha} +C_2 \|u\|_{L^{q}}.
$$
\end{lem}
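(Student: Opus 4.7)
The plan is to treat this as the classical Gagliardo-Nirenberg interpolation inequality on $\mathbb{R}^n$ (with the additive $C_2\|u\|_{L^q}$ term covering the bounded-domain/non-decaying case), whose proof proceeds by scaling, density, and a bootstrap on the derivative order $j$. First I would reduce to $u\in C_c^\infty(\mathbb{R}^n)$ by a standard density argument (mollification plus truncation), since both sides of the target inequality are continuous in the relevant norms on the appropriate Sobolev space. The exponent relation
\[
\tfrac{1}{p}=\tfrac{j}{n}+\bigl(\tfrac{1}{r}-\tfrac{m}{n}\bigr)\alpha+\tfrac{1-\alpha}{q}
\]
is exactly the balance forced by homogeneity: replacing $u$ by $u_\lambda(x)=u(\lambda x)$ and demanding scale invariance of the homogeneous part pins the exponents down, so any failure of the inequality could be rescaled to a contradiction.

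Next, I would prove the endpoint case $j=0$, $\alpha=1$, $r<n$, $p=nr/(n-r)$, which is the Gagliardo-Nirenberg-Sobolev embedding. The clean way is to start from the one-dimensional estimate $|u(x)|\le \int_{-\infty}^{x_i}|\partial_i u|\,dx_i$, take the geometric mean over $i=1,\dots,n$, and integrate via repeated H\"older; alternatively, one can use the Riesz potential representation $u=c_n I_1(-\Delta)^{1/2}u$ and the Hardy-Littlewood-Sobolev inequality. From this I would get the general $\alpha=1$ case via the substitution $v=|u|^\beta\operatorname{sgn} u$ for a suitable $\beta$, producing all $L^p$-versus-$L^r$ first-derivative inequalities. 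Higher-derivative cases $m\ge 2$ are obtained iteratively by viewing $D^{m-1}u$ as the function to which the first-order result is applied, combined with H\"older-type interpolation in $L^p$ scales to sweep out intermediate $\alpha\in(j/m,1)$.

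Finally, for the additive $C_2\|u\|_{L^q}$ correction, I would use a cut-off/Poincar\'e argument: split $u=\chi u + (1-\chi)u$ where $\chi$ localizes to unit cubes, apply the homogeneous inequality on each cube to the zero-mean part, and absorb the mean via the $\|u\|_{L^q}$ term; summing in $\ell^p$ over the cubes recovers the global estimate. I expect the main bookkeeping obstacle to be tracking the admissible range of the parameter $\alpha$ (in particular the exclusions $r>1$ with $m-j-n/r\in\mathbb{N}\cup\{0\}$, which in the Nirenberg formulation force $\alpha<1$): these endpoint exclusions are exactly where the interpolation degenerates and a direct logarithmic Sobolev-type argument is needed. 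Since the result is standard and the statement is quoted verbatim from \cite{Nirenberg}, I would simply defer the details to that reference after sketching the above outline.
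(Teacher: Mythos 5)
The paper offers no proof of this lemma at all---it is stated as a black-box classical result with a citation to Nirenberg's 1959 paper, which is exactly what you do in the end by deferring the details to \cite{Nirenberg}. Your outline (density, scaling to fix the exponent balance, the $j=0$, $\alpha=1$ Sobolev endpoint via the 1D integral plus geometric-mean trick, the $|u|^\beta\operatorname{sgn} u$ substitution, the iterative bootstrap in $m$, and a localization argument for the additive $\|u\|_{L^q}$ term) is a correct sketch of the standard proof, and your remark about the exceptional range of $\alpha$ when $m-j-n/r$ is a nonnegative integer matches Nirenberg's own caveat; since the paper itself does not prove the lemma, this is consistent with, and somewhat more informative than, what the paper does.
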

\noindent A special but important case of the above lemma reads as
\begin{rem}
$\|u\|_{L^\infty} \le C\|u\|_{L^2}^{\frac{1}{2}}\|u\|_{H^1}^{\frac{1}{2}}$ follows from taking $m=n=1$, $j=0$, $p=\infty$ and $r=q=2$.
\end{rem}

For any $t \in [0,T]$, any small $\varepsilon>0$, and the approximate solution $c_A$, the spectrum of the self-adjoint operator $\mathcal{L_\varepsilon}=-\Delta+\varepsilon^{-2}f''(c_A)$ has a lower bound. More precisely, we have the following estimate.
%%%%%%%%%%%%%%%%%%%%%%%%%%%%%%%%%%%%%%%%%%%%%%%
\begin{prop} \label{theorem:thm2.10}
Let $c_{A}$ be the approximate solution, and the a priori assumptions $(\ref{9.6.2})$ be satisfied for some $M>0$.
Then, there are constants $C, \varepsilon_{0}>0$, independent of $M$ and $c_{A}$, such that for every $t \in [0, T_{0}]$
and $\varepsilon \in(0, \varepsilon_{0}]$,
$$
\int_{\Omega}\left(|\nabla \varphi(x)|^{2}+\varepsilon^{-2} f^{\prime \prime}\left(c_{A}(x, t)\right) \varphi^{2}(x)\right) \mathrm{d} x
\geq-C \int_{\Omega} \varphi^{2} \mathrm{~d} x+\int_{\Omega }|\nabla_{\tau} \varphi|^{2} \mathrm{~d}x,\quad\forall\, \varphi \in H^{1}(\Omega).
$$
%%%%% holds for each function $\varphi \in H^{1}(\Omega)$.
\end{prop}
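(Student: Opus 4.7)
\medskip
\noindent\textbf{Proof proposal.}
The plan is to exploit the fact that the tangential component of $\nabla\varphi$ does not interact with the singular potential $\varepsilon^{-2}f''(c_A)$: only the normal mode controls the spectral gap, so the tangential gradient can be freely moved across the inequality. Accordingly, I would decompose $\Omega$ into the tubular region $\Gamma_t(2\delta)$ and its complement, and treat the two pieces separately. In the interior region the local coordinates $(r,s,t)$ of Section~2 yield the pointwise orthogonal decomposition $|\nabla\varphi|^2 = |\partial_{\mathbf{n}}\varphi|^2 + |\nabla_\tau\varphi|^2$, which immediately transfers the tangential gradient from the left-hand to the right-hand side of the claimed inequality.

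In the exterior region $\Omega\setminus\Gamma_t(2\delta)$, the construction of the approximate profile gives $c_A\equiv \pm 1$, hence $f''(c_A)=f''(\pm 1)>0$ and the bound
\[
\int_{\Omega\setminus\Gamma_t(2\delta)}\bigl(|\nabla\varphi|^2+\varepsilon^{-2}f''(c_A)\varphi^2\bigr)\,dx
\;\geq\; \int_{\Omega\setminus\Gamma_t(2\delta)}|\nabla_\tau\varphi|^2\,dx
\]
is trivial (with $\nabla_\tau\varphi$ extended by zero outside $\Gamma(3\delta)$, which only shrinks the right-hand side). It then remains to establish, on the interior,
\[
\int_{\Gamma_t(2\delta)}\bigl(|\partial_{\mathbf{n}}\varphi|^2+\varepsilon^{-2}f''(c_A)\varphi^2\bigr)\,dx
\;\geq\; -C\int_{\Gamma_t(2\delta)}\varphi^2\,dx.
\]
Fixing $s$ and passing to the stretched normal variable $\rho = r/\varepsilon - h_\varepsilon(s,t)$, this reduces to a one-parameter family of one-dimensional spectral problems for $-\partial_r^2 + \varepsilon^{-2}f''(c_A(r,s,t))$ on $(-2\delta,2\delta)$. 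The classical Chen spectral estimate~\cite{Chen X} supplies precisely such a lower bound of $-C$, obtained by testing against the ground-state mode $\theta_0'(\rho)$ of the rescaled linearized Allen--Cahn operator and exploiting its exponential decay. Integrating the resulting inequality in $s$ against the Jacobian $J(r,s)$ and combining with the exterior estimate yields the proposition.

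The delicate point is ensuring that the constant $C$ may be chosen independently of $M$ and $c_A$. In $\Gamma_t(2\delta)$ the approximate profile takes the form $c_A = \theta_0(\rho) + O(\varepsilon^2)$ with perturbations of class $\mathcal{R}_{k,\alpha}$ in the sense of Definition~2.1, while the shift $h_\varepsilon$ is uniformly bounded by the a~priori assumption (\ref{9.6.2}). Consequently $f''(c_A)$ differs from $f''(\theta_0(\rho))$ by an exponentially decaying $O(\varepsilon^2)$ perturbation, which is a bounded relative perturbation of the normalized one-dimensional operator and can be absorbed into the $-C\int\varphi^2$ remainder once $\varepsilon_0$ is taken small enough. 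This is the only step where uniformity in $M$ needs to be tracked with care; the rest of the argument is structural.
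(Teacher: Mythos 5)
The paper does not give a proof of this proposition; it simply refers the reader to \cite[Theorem 2.13]{Liu Y N}, so there is no internal argument for you to match. Your proposal is therefore a reconstruction of the cited proof, and its skeleton is the standard one used for such spectral estimates and is correct: (i) split $\Omega$ into the tubular region $\Gamma_t(2\delta)$ and its complement; (ii) in the complement use that $c_A\equiv\pm1$, so $f''(c_A)>0$ and the inequality holds trivially (with $\nabla_\tau\varphi$ vanishing outside $\Gamma(3\delta)$); (iii) in the tubular region use the pointwise orthogonal decomposition $|\nabla\varphi|^2=|\partial_{\mathbf n}\varphi|^2+|\nabla_\tau\varphi|^2$, pass the tangential gradient to the right-hand side, and reduce the normal part to a one-dimensional spectral problem for $-\partial_r^2+\varepsilon^{-2}f''(c_A)$, which is handled by the Chen--de~Mottoni--Schatzman spectral gap for $-\partial_\rho^2+f''(\theta_0(\rho))$, together with the observation that the finite-interval boundary correction is $O(\varepsilon^{-2}e^{-c\delta/\varepsilon})$ and hence bounded. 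You also correctly account for the Jacobian $J(r,s)$ when reducing to the one-dimensional family.

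The one place where your sketch is thinner than the claim demands is the uniformity of $C$ and $\varepsilon_0$ in $M$ (and $c_A$). Since $c_2^{in}$, $c_3^{in}$, and the shift $h_\varepsilon$ carry $M$-dependent bounds, $c_A-\theta_0(\rho)=O_M(\varepsilon^2)$ leads, after multiplication by $\varepsilon^{-2}$, to a perturbation of the potential of size $O_M(1)$; simply ``absorbing it into $-C\int\varphi^2$ once $\varepsilon_0$ is small enough'' does not by itself make $C$ or $\varepsilon_0$ independent of $M$. In \cite{Liu Y N} this is dealt with more carefully (separating the $M$-independent leading profile and a Hardy-type control of the perturbation relative to the spectral gap, rather than a crude $L^\infty$ absorption). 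You flag this as the delicate step, which is the right instinct, but a complete proof would need to spell out how the $M$-dependent $O(1)$ potential perturbation is controlled without letting $M$ enter the final constants.
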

\begin{proof}
The proof can be found in \cite[Theorem 2.13]{Liu Y N}.
\end{proof}

Finally, it is convenient to introduce the following property
which follows directly from the construction of the approximate solutions.
\begin{lem}   \label{corollary:corB.3}
Let the assumptions (\ref{9.6.2}) be satisfied, and $\hat{c}_{2}$ and $\hat{c}_{3}$ be defined in (\ref{9.27.2}). Then, we have
\begin{align*}
&\varepsilon^{2} \|(c_{2}^{i n},\nabla_{\tau} c_{2}^{i n})\|_{L^\infty\left(0, T; L^{2}\left(\Gamma_{t}(2 \delta)\right)\right)} \leq C(M) \varepsilon^{\frac{5}{2}}, \quad \varepsilon^{2}\| \partial_{\mathbf{n}} c_{2}^{i n} \|_{L^{\infty}\left(0, T; L^{2}\left(\Gamma_{t}(2 \delta)\right)\right)} \leq C(M) \varepsilon^{\frac{3}{2}}, \\
&\varepsilon^{3}\|\nabla c_{3}^{i n}\|_{L^{\infty}\left(0, T; L^{2}\left(\Gamma_{t}(2 \delta)\right)\right)} \leq C(M, \theta) \varepsilon^{\frac{5}{2}-\theta}, \quad \varepsilon^{3}\|\nabla c_{3}^{i n}\|_{L^{2}\left(0, T; L^{2}\left(\Gamma_{t}(2 \delta)\right)\right)} \leq C(M) \varepsilon^{\frac{5}{2}},
\end{align*}
for any $\theta \in(0,1)$.
\end{lem}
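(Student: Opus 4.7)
The estimates will all be reduced, via the chain rule in the tubular coordinates $(r,s,t)$ of Section 2 and the change of variables $r=\varepsilon(\rho+h_\varepsilon)$, to the decay lemma (Lemma 1.3 and Remark \ref{rem11.16.1}). The plan is to treat the four bounds in turn.

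\emph{Step 1 (Structure).} By the construction in Subsection 3.1 (equation (\ref{9.27.2})), $c_j^{in}(x,t)=\hat{c}_j(\rho(x,t),x,t)$ with $\hat{c}_2,\hat{c}_3$ exponentially decaying in $\rho$; more precisely $\hat{c}_2\in\mathcal{R}_{0,\alpha}$ with smooth $(x,t)$-coefficients, while $\hat{c}_3\in\mathcal{R}_{0,\alpha}$ has coefficients built from $h_1$, $h_{2,\varepsilon}$ and their tangential/time derivatives (so only finitely regular on the $(s,t)$-side). Using formulas (\ref{9.11.1})--(\ref{9.12.1}) and $\partial_{\mathbf n}\rho=1/\varepsilon$, $\nabla_\tau\rho=-\nabla_\tau h_\varepsilon$, the chain rule gives
\begin{align*}
\partial_{\mathbf n}c_j^{in}&=\tfrac{1}{\varepsilon}(\partial_\rho\hat{c}_j)(\rho,\cdot)+(\partial_{\mathbf n}\hat{c}_j)(\rho,\cdot),\\
\nabla_\tau c_j^{in}&=(\partial_\rho\hat{c}_j)\,\nabla_\tau\rho+(\nabla_\tau\hat{c}_j)(\rho,\cdot).
\end{align*}
Each $\partial_\rho\hat{c}_j$, $\partial_{\mathbf n}\hat{c}_j$, $\nabla_\tau\hat{c}_j$ still lies in some $\mathcal{R}_{0,\alpha'}$, so by Lemma 1.3 the $L^2$-in-$r$ integral of $\hat{c}_j(\rho(\cdot),\cdot)$ or its $\rho$-derivative contributes the universal gain $\varepsilon^{1/2}$.

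\emph{Step 2 (The $c_2^{in}$ bounds).} Since the $(x,t)$-coefficients of $\hat{c}_2$ are smooth and bounded uniformly in $\varepsilon$, the $L^\infty_tL^2_{\Gamma_t(2\delta)}$-norm of each of $c_2^{in}$, $\partial_\rho\hat{c}_2$, $\partial_{\mathbf n}\hat{c}_2$, $\nabla_\tau\hat{c}_2$ evaluated at $\rho=\rho(x,t)$ is bounded by $C\varepsilon^{1/2}$ via Lemma 1.3. For $\nabla_\tau\rho=-\nabla_\tau h_\varepsilon(S,t)$ the factor $\nabla_\tau h_\varepsilon$ is uniformly bounded in $L^\infty_tL^2_s$ (using $h_1\in C^\infty$ and the embedding $X_T\hookrightarrow BUC([0,T];H^{3/2}(\mathbb T^1))$ for $h_{2,\varepsilon}$), so $\|(\partial_\rho\hat{c}_2)\nabla_\tau\rho\|_{L^\infty_tL^2_x}\lesssim\varepsilon^{1/2}$. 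Combining, $\varepsilon^2\|(c_2^{in},\nabla_\tau c_2^{in})\|_{L^\infty_tL^2_x}\lesssim\varepsilon^{5/2}$. For the normal derivative the $(1/\varepsilon)\partial_\rho\hat{c}_2$ term dominates and Lemma 1.3 yields $\varepsilon^{-1/2}$, hence $\varepsilon^2\|\partial_{\mathbf n}c_2^{in}\|_{L^\infty_tL^2_x}\lesssim\varepsilon^{3/2}$.

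\emph{Step 3 (The $\nabla c_3^{in}$ bounds).} Writing $\nabla=\mathbf n\,\partial_{\mathbf n}+\nabla_\tau$, the normal part again produces a factor $1/\varepsilon$ from $\partial_{\mathbf n}\rho$, so after Lemma 1.3 the bare $\varepsilon$-counting gives $\varepsilon^3\|\nabla c_3^{in}\|\lesssim\varepsilon^3\cdot\varepsilon^{-1}\cdot\varepsilon^{1/2}=\varepsilon^{5/2}$, \emph{provided} the $(s,t)$-coefficients appearing in $\hat{c}_3$ are controlled in the relevant function space. For the $L^2(0,T;L^2)$ bound one uses the full strength $h_{2,\varepsilon}\in L^2(0,T;H^{5/2}(\mathbb T^1))$, which handles the highest tangential derivatives of $h_{2,\varepsilon}$ entering $\hat{c}_3$ in $L^2_tL^\infty_s$ (via $H^{5/2}\hookrightarrow W^{2,\infty}$), so the optimal rate $\varepsilon^3\|\nabla c_3^{in}\|_{L^2_tL^2_x}\le C(M)\varepsilon^{5/2}$ is obtained cleanly.

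\emph{Step 4 (The $L^\infty_t$ loss).} For the $L^\infty(0,T;L^2)$ bound the same coefficients must now be taken in $L^\infty_tL^\infty_s$. The embedding $X_T\hookrightarrow BUC([0,T];H^{3/2}(\mathbb T^1))$ only controls $\partial_s h_{2,\varepsilon}$ in $L^\infty_tH^{1/2}$, which fails to embed into $L^\infty_s$. To recover an $L^\infty_t L^\infty_s$ bound on $\partial_s^2 h_{2,\varepsilon}$ (or similar), one interpolates between $L^\infty(0,T;H^{3/2})$ and $L^4(0,T;H^2)$, producing $L^\infty(0,T;H^{2-\theta})\hookrightarrow L^\infty_tL^\infty_s$ for any $\theta\in(0,1)$ at the cost of a constant $C(M,\theta)$ and a factor $\varepsilon^{-\theta}$ when traded back against the formal order of the expansion. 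This yields $\varepsilon^3\|\nabla c_3^{in}\|_{L^\infty_tL^2_x}\le C(M,\theta)\varepsilon^{5/2-\theta}$.

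\emph{Main obstacle.} The routine part is the chain-rule bookkeeping and the application of Lemma 1.3. The genuine difficulty is isolating which derivatives of $h_{2,\varepsilon}$ appear in $\hat{c}_3$ and realizing that, because only $X_T$-regularity is assumed, the $L^\infty$-in-time estimate must be obtained by interpolation with an unavoidable $\varepsilon^\theta$-loss, while the $L^2$-in-time estimate is sharp. This asymmetry is precisely what is reflected in the stated bounds.
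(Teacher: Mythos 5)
The paper itself does not prove this lemma: it only cites \cite[Lemma~4.3]{Liu Y N}. So there is no in-paper proof to compare against, and what you have written is an attempted reconstruction. The overall strategy --- reduce each bound via the tubular-coordinate chain rule and $\partial_{\mathbf n}\rho=1/\varepsilon$ to the $\mathcal{R}_{k,\alpha}$-decay in Lemma~2.2, then track $\varepsilon$-powers --- is indeed the right skeleton, and Steps~1--2 are fine.

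However, Step~4 contains a genuine gap. The claim that interpolating between $L^\infty(0,T;H^{3/2}(\mathbb{T}^1))$ and $L^4(0,T;H^{2}(\mathbb{T}^1))$ produces $L^\infty(0,T;H^{2-\theta}(\mathbb{T}^1))$ is false. Real (or complex) interpolation between $L^\infty_t H^{3/2}$ and $L^4_t H^{2}$ gives the scale $L^{4/\theta}_t H^{3/2+\theta/2}$ for $\theta\in[0,1]$; the time exponent is $\infty$ only at $\theta=0$, where the spatial regularity is just $H^{3/2}$. You cannot gain spatial regularity in the $L^\infty_t$ norm this way --- the inequality $\|h\|_{L^\infty_t H^{2-\theta}}\lesssim \|h\|_{L^\infty_t H^{3/2}}^{a}\|h\|_{L^4_t H^2}^{1-a}$ simply does not hold, because the right-hand side has the supremum in $t$ only of the weaker spatial norm. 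As a consequence, the source of the $\varepsilon^{-\theta}$ loss in the $L^\infty_t L^2$ bound is left unexplained by your argument. (The actual mechanism in Abels--Liu trades regularity against the $r$-integrability exponent in the $L^p$-version of Lemma~2.2/Remark~2.3, not against a time interpolation.) A secondary, smaller slip: $H^{5/2}(\mathbb{T}^1)$ does not embed into $W^{2,\infty}(\mathbb{T}^1)$ (the borderline $H^{1/2}\hookrightarrow L^\infty$ fails), though the $L^2_t L^2_x$ bound in Step~3 survives with the correct embedding $H^{5/2}\hookrightarrow H^2\hookrightarrow W^{2,2}$, which is all you need when the coefficient must land in $L^2_s$.

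To repair Step~4 you would need to identify precisely which tangential derivatives of $h_{2,\varepsilon}$ enter $\nabla c_3^{in}$, observe that the $L^\infty$-in-time control of the a priori assumption \eqref{9.6.2} gives one half-derivative less than what is needed in $L^2_s$, and then recover the deficit by an argument that manifestly introduces a dependence on $\theta$ and a loss $\varepsilon^{-\theta}$ --- e.g.\ via a Sobolev/Hölder trade-off between the $s$-integrability of the coefficients and the $r$-integrability exponent used in the $L^p$-version of the decay lemma --- rather than by the time-interpolation you invoked.
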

\begin{proof}
We refer to \cite[Lemma 4.3]{Liu Y N} for a proof of this lemma.
\end{proof}

%\begin{prop}
%(The Elliptic Estimates): Let $k \in \mathbb{N}$ and $v \in H^{1}$ such that $\Delta v \in H^{k}$ and $\frac{\partial v}{\partial \mathbf{n}}=0$ on $\partial \Omega$, then it holds that $v \in H^{k+2}$ and there exists a constant $C>0$, independent of $v$, such that
%\begin{align}
%\|v\|_{H^{k+2}} \leq C (\|\Delta v\|_{H^{k}}+\|v\| ).
%\end{align}
%\end{prop}
%\begin{proof}
%The proof is referred to (\cite{Grisvard}, \cite{Taylor}).
%\end{proof}

\section{Estimates of Solutions to the Error Equations}
In this section, based on the matched asymptotical expansion method, we first construct the approximate solution
used in this paper. Then, we derive the estimates of the error functions by the a priori energy estimate method.
For $p_A$ and $c_A$, we shall adopt the construction in \cite{Liu Y N}, while for $\mathbf{\tilde{v}}_A$ we shall take
a small adjustment to make it satisfy the divergence-free condition.
%% And the main part in this section is devoted to deriving the estimates of error functions by the a priori energy estimate method.

\subsection{Construction of the approximate solution}
The approximate solution contains two main parts: the inner layer part and the outer part, which are constructed
by the matched asymptotic expansion method. First, we define the inner approximate solution as follows.
\begin{eqnarray}
\begin{split}   \label{9.27.2}
\mathbf{v}_A^{in}(\rho,x,t)&=\mathbf{v}_0(\rho,x,t)+\varepsilon \mathbf{v}_1(\rho,x,t)+\varepsilon^2 \mathbf{v}_2(\rho,x,t),\\
p_A^{in}(\rho,x,t)&=\varepsilon^{-1} p_{-1}(\rho,x,t)+p_0(\rho,x,t)+\varepsilon p_1(\rho,x,t),\\
c^{i n}(x, t)&=\hat{c}^{i n}(\rho, s, t) =\theta_{0}(\rho)+\varepsilon^{2} \hat{c}_{2}(\rho, S(x, t), t)+\varepsilon^{3} \hat{c}_{3}(\rho, S(x, t), t) \\
&\triangleq c_{0}^{i n}(x, t)+\varepsilon^{2} c_{2}^{i n}(x, t)+\varepsilon^{3} c_{3}^{i n}(x, t),
\end{split}
\end{eqnarray}
where recall that $s=S(x, t)$, and
$$
\rho=\frac{d_{\Gamma}(x, t)}{\varepsilon}-h_{1}(S(x, t), t)-\varepsilon h_{2, \varepsilon}(S(x, t), t).
$$
To write the formula for $\mathbf{v}_{i}$ and $p_{i}$, we require
$\eta(\rho)\triangleq -1+\frac{2}{\sigma} \int_{-\infty}^{\rho}\theta_{0}^{\prime}(s)^{2}\mathrm{~d} s$ for all $\rho\in\mathbb{R}$,
which means
$$
|\eta(\rho) \mp 1| = O(e^{-\alpha|\rho|}), \ \quad \text { when }\ \rho \gtrless 0,
$$
for some $\alpha>0$. Let $\mathbf{v}_{i}$ and $p_{j}$ take the following form:
\begin{align*}
\mathbf{v}_{i} (\rho, x, t)&=\tilde{\mathbf{v}}_{i}(\rho, x, t)+\eta(\rho) d_{\Gamma}(x, t) \hat{\mathbf{v}}_{i}(x, t),\quad i=0,1,2,\\
p_{j}(\rho, x, t)&=\tilde{p}_{j}(\rho, x, t)+\eta(\rho) d_{\Gamma}(x, t) \hat{p}_{j}(x, t),  \quad j=-1,0,1,
\end{align*}
where $\tilde{\mathbf{v}}_{i}$, $\hat{\mathbf{v}}_{i}$, $\tilde{p}_{j}$ and $\tilde{p}_{j}$ are defined as those in \cite[Section 3.1]{Liu Y N}.

To get the approximate solution, we require the outer expansion satisfies
\begin{align*}
\mathbf{v}_A^{\pm}(x,t)=\mathbf{v}_0^{\pm}(x,t)+\varepsilon \mathbf{v}_1^{\pm}(x,t)+\varepsilon^2 \mathbf{v}_2^{\pm}(x,t),\quad p_A^{\pm}(x,t)=p_0^{\pm}(x,t)+\varepsilon p_1^{\pm}(x,t),\quad c_{\pm}^{out}=\pm 1.
\end{align*}
Here $ p_{-1}^{\pm} \equiv 0$, $\mathbf{v}_{0}^{\pm}$ and $p_{0}^{\pm}$ are defined by
$\mathbf{v}_{0}^{\pm} \triangleq \mathbf{v}|_{\Omega^{\pm}(t)}$ and $p_{0}^{\pm}\triangleq p|_{\Omega^{\pm}(t)}$ respectively,
where $(\mathbf{v}, p)$ is the smooth solution of (\ref{11.8.1})-(\ref{11.8.2}).
Moreover, $\mathbf{v}_{1}^{\pm}$, $\mathbf{v}_{2}^{\pm}$ and $p_{1}^{\pm}$ are defined in the same way as
in \cite[Section 3.1]{Liu Y N}. In addition, we select a smooth cut-off function $\zeta$ satisfying
\begin{eqnarray*}
\begin{cases}
\zeta(r) \equiv 1 \quad \text{ if } |r| \le \delta,\quad \zeta(r) \equiv 0 \quad \text{ if } |r| \ge 2\delta,\\
0\le-r\zeta'(r)\le4 \quad \text{ if } \delta \le |r| \le2\delta.
\end{cases}
\end{eqnarray*}
In summary, we ``glue" the internal and external expansions together to construct the approximate solution $(\mathbf{v}_A,p_A,c_{A})$
in $\Omega \times [0, T_{0}]$ as
\begin{align}
\mathbf{v}_A(x,t)&=\zeta \circ d_\Gamma \mathbf{v}_A^{in}(\rho,x,t)+(1-\zeta \circ d_\Gamma)(\mathbf{v}_A^+(x,t)\chi_+ +\mathbf{v}_A^-(x,t)\chi_-), \label{9.15.1}\\
p_A(x,t)&=\zeta \circ d_\Gamma p_A^{in}(\rho,x,t)+(1-\zeta \circ d_\Gamma)(p_A^+(x,t)\chi_+ +p_A^-(x,t)\chi_-),\notag\\
c_{A}(x, t) &=\zeta \circ d_{\Gamma} c^{i n}(x, t)+(1-\zeta \circ d_{\Gamma})(c_{+}^{o u t} \chi_{+}+c_{-}^{o u t} \chi_{-}) \notag\\
&=c_{+}^{out } \chi_{+}+c_{-}^{out } \chi_{-}+\left(c^{i n}(x, t)-c_{+}^{out} \chi_{+}-c_{-}^{out} \chi_{-}\right) \zeta \circ d_{\Gamma},\label{9.27.3}
\end{align}
where $\chi_{\pm}\triangleq \chi_{\Omega^{\pm}(t)}$.

Lastly, let us assume that $\mathbf{\tilde{v}}_A$ takes the form of $\mathbf{\tilde{v}}_A=\mathbf{v}_A+\varepsilon \,\hat{\mathbf{f}}$.
As in (\ref{9.15.1}), we have
\begin{align*}
\operatorname{div}\mathbf{v}_A
= &\;\operatorname{div}(\zeta \circ d_\Gamma)(\mathbf{v}_A^{in}-\mathbf{v}_A^+\chi_+ -\mathbf{v}_A^-\chi_-)
+(\zeta \circ d_\Gamma)\operatorname{div} \,\mathbf{v}_A^{in} \\
&  +(1-\zeta \circ d_\Gamma)(\operatorname{div} \,\mathbf{v}_A^+\chi_+ +\operatorname{div} \,\mathbf{v}_A^-\chi_-)
\triangleq I_1+I_2+I_3.
\end{align*}
For $I_1$, one sees $\operatorname{div}(\zeta \circ d_\Gamma)=0$ within $\Gamma(\delta) $; while outside $\Gamma(\delta)$, $\mathbf{v}_A^{in}-\mathbf{v}_A^+\chi_+ -\mathbf{v}_A^-\chi_-$ decays exponentially with respect to the stretched variable $\rho$.
Accordingly, $\mathbf{v}_A^{in}-\mathbf{v}_A^+\chi_+ -\mathbf{v}_A^-\chi_- \sim O(\varepsilon^2)$.
Thus, we infer from the matched asymptotic expansion of divergence equation that
\begin{align}
\operatorname{div} \,\mathbf{v}_A^{in}= \varepsilon &\big(-(\rho+h_{1}) \eta^{\prime}(\rho) \nabla^{\Gamma} h_{1} \cdot
\hat{\mathbf{v}}_{0, \tau}+h_{2} \eta^{\prime}(\rho) \hat{\mathbf{v}}_{0, \mathbf{n}}+(\rho+h_{1}) \eta^{\prime}(\rho)
\hat{\mathbf{v}}_{1, \mathbf{n}}\notag\\
&-d_{\Gamma} \eta^{\prime}(\rho) \nabla^{\Gamma} h_{1} \cdot \hat{\mathbf{v}}_{1, \tau}-d_{\Gamma} \eta^{\prime}(\rho) \nabla^{\Gamma} h_{2}
\cdot \hat{\mathbf{v}}_{0, \tau}\big)\label{9.25.1}\\
+\varepsilon^{2}&\big((\rho+h_{1}) h_{2} \eta^{\prime}(\rho) \frac{\hat{\mathbf{v}}_{0, \mathbf{n}}}{d_{\Gamma}}-h_{2} \eta^{\prime}(\rho)
\nabla^{\Gamma} h_{1} \cdot \hat{\mathbf{v}}_{0, \tau}+h_{2} \eta^{\prime}(\rho) \hat{\mathbf{v}}_{1, \mathbf{n}}-d_{\Gamma} \eta^{\prime}(\rho)
\nabla^{\Gamma} h_{2} \cdot \hat{\mathbf{v}}_{1, \tau}\notag \\
&-\nabla^{\Gamma} h_{\varepsilon} \cdot \partial_{\rho} \tilde{\mathbf{v}}_{2, \tau}+(\rho+h_{\varepsilon}) \eta^{\prime}(\rho)
\hat{\mathbf{v}}_{2, \mathbf{n}}(x, t)-d_{\Gamma} \eta^{\prime}(\rho) \nabla^{\Gamma}
h_{\varepsilon} \cdot \hat{\mathbf{v}}_{2, \tau}(x, t)\notag \\
&+\operatorname{div}\left(\tilde{\mathbf{v}}_{2}(\rho, x, t)+\hat{\mathbf{v}}_{2}({x}, t) d_{\Gamma} \eta(\rho)\right)\big), \notag
\end{align}
where the detailed calculations are omitted for the sake of simplicity, and can be found in \cite[Appendix]{Liu Y N}.

Since (\ref{9.25.1}) vanishes on $\Gamma$ and $d_\Gamma=\varepsilon(\rho+h_\varepsilon)$,
we replace ${d_\Gamma}$ by $\varepsilon (\rho+h_\varepsilon)$ in (\ref{9.25.1}). Moreover, by virtue of $\eta^{\prime}(\rho) $,  $I_2$
can be viewed as power of $\varepsilon^2$. To proceed further, we obtain
$I_3=\varepsilon^2(1-\zeta \circ d_\Gamma)\operatorname{div} \,\mathbf{v}_2^{\pm} \chi_{\pm} $.
Hence, an appropriate $\mathbf{f}$ can be selected, such that
\begin{eqnarray}   \label{9.27.1}
\mathbf{\tilde{v}}_A=\mathbf{v}_A+\varepsilon^2 \,\mathbf{f},
\end{eqnarray}
where $\mathbf{\tilde{v}}_A$ is required to satisfy the divergence free condition, which is useful in computing the error function
of the pressure $p$.

\subsection{Estimates of the error equation for the velocity}
In this subsection, we consider the estimates of the error function of the velocity.
Let $(\mathbf{v}_\varepsilon, p_\varepsilon)$ be a solution to the equation
\begin{eqnarray} \label{2.1}
\partial_t\mathbf{v}_\varepsilon+\mathbf{v}_\varepsilon \nabla \mathbf{v}_\varepsilon-\Delta \mathbf{v}_\varepsilon+\nabla p_\varepsilon
+\varepsilon \operatorname{div}(\nabla c_\varepsilon \otimes \nabla c_\varepsilon)=0.
\end{eqnarray}
Based on the construction in the previous subsection, we can carry out calculations similar to those in the proof of
\cite[Theorem 3.5]{Liu Y N} to find that the approximate solution $(\mathbf{{\tilde v}}_A, p_A)$ satisfies
\begin{eqnarray}
\begin{split}  \label{2.2}
&\partial_t\mathbf{{\tilde v}}_A+\mathbf{{\tilde v}}_A \nabla \mathbf{{\tilde v}}_A-\Delta \mathbf{{\tilde v}}_A+\nabla p_A+\varepsilon \,\operatorname{div}(\nabla c_A \otimes \nabla c_A)\\
& \quad =(\zeta \circ d_\Gamma)\Theta_1+\varepsilon^3 \operatorname{div}[(\zeta \circ d_\Gamma)\Theta_2]+\Theta_3+\varepsilon \operatorname{div}\Theta_4,
\end{split}
\end{eqnarray}
where the lower-order term $\Theta_1$ decays exponentially with respect to the stretched variable,
$\Theta_3$ and $\Theta_4$ are the higher-order error terms. Moreover,
\begin{align} \label{4.1.1}
\Theta_2=\nabla c_0^{in}\otimes \nabla g+\nabla g \otimes \nabla c_0^{in} \quad  \text{ and } \quad \|(\Theta_3, \Theta_4)\|_{L^\infty(0,T;L^2(\Omega))} \le C\varepsilon^2,
\end{align}
with $g=c_2^{in}+\varepsilon c_3^{in}$. From Lemma \ref{corollary:corB.3} it follows that
\begin{align} \label{11.16.1}
\|\partial_\mathbf{n}^i g\|_{L^\infty(0,T;L^2(\Omega))} \le C\varepsilon^{\frac{1}{2}-i},\quad \|\nabla_\tau g\|_{L^\infty(0,T;L^2(\Omega))}
\le C\varepsilon^{\frac{1}{2}} .
\end{align}

Assume $(\mathbf{v}_A^{in},p_A^{in},c^{in})$ takes the form in (\ref{9.27.2}). Then, inserting $(\mathbf{v}_A^{in},p_A^{in},c^{in})$
into the equation of (\ref{9.6.4}), we find that there is no essential difference in the expansions
between the Stokes equations considered in \cite{Liu Y N} and the Navier-Stokes equations here, at least for the expansions
up to the order of $\varepsilon^{1}$. So, we are able to follow a process similar to that used in \cite[Appendix]{Liu Y N}
and utilize Lemma 3.4 in \cite{Liu Y N} to deduce that
\begin{eqnarray}
\begin{split}  \label{12.21.1}
\Theta_1 \triangleq &\partial_t \mathbf{v}_{A}^{i n}+\mathbf{v}_{A}^{i n}\cdot\nabla \mathbf{v}_{A}^{i n}-\Delta \mathbf{v}_{A}^{i n}+\nabla p_{A}^{i n}+\varepsilon \operatorname{div}(\nabla c_{0}^{i n} \otimes \nabla c_{0}^{i n})\\
=&\,r_{\varepsilon}\left(\frac{d_{\Gamma}}{\varepsilon}-h_{\varepsilon}, x, t\right)+\varepsilon \,\tilde{r}_{\varepsilon}\left(\frac{d_{\Gamma}}{\varepsilon}-h_{\varepsilon}, x, t\right) +\sum_{i\leq 2 ; 0 \leq i', j, j^{\prime} \leq 1} \varepsilon^{2} R_{\varepsilon}^{i j i^{\prime} j^{\prime}}(x, t)(\partial_{s}^{i} h_{2})^{i^{\prime}}(\partial_{s}^{j} h_{2})^{j^{\prime}} \\
&+\sum_{0 \leq i,j,k, i^{\prime}, j^{\prime}, k^{\prime} \leq 1} \varepsilon^{2} \tilde{R}_{\varepsilon}^{i j k i^{\prime} j^{\prime} k^{\prime}}(x, t)(\partial_{s}^{i} h_{2})^{i^{\prime}}(\partial_{s}^{j} h_{2})^{j^{\prime}}(\partial_{s}^{k} h_{2})^{k^{\prime}}\quad\;\;
\mbox{in }\Gamma_{t}(3\delta),
\end{split}
\end{eqnarray}
 where $h_{\varepsilon}$ is defined by (\ref{9.6.2}) and the other functions satisfy the following properties:
\begin{align}  \label{9.10.1}
\;(r_{\varepsilon})_{0<\varepsilon<1} \in \mathcal{R}_{0, \alpha}^{0}, \;\; (\tilde{r}_{\varepsilon})_{0<\varepsilon<1}
\in \mathcal{R}_{0, \alpha}\quad \text{and} \quad \|(R_{\varepsilon}^{i j i^{\prime} j^{\prime}},
\tilde{R}_{\varepsilon}^{i j k i^{\prime} j^{\prime} k^{\prime}})\|_{L^\infty((0,T)\times\Gamma(3 \delta))} \le C
\end{align}
for some $\alpha,C >0  $.

To get the estimate stated in (\ref{9.6.1}), we shall proceed through this subsection to derive a bound for the term
 ${\sup}\, \|\mathbf{w}\|^2_{L^2(\Omega)}$. Set
\begin{eqnarray} \label{9.1.3}
\mathbf{w}=\mathbf{v}_\varepsilon-\mathbf{\tilde v}_A, \quad u =c_\varepsilon-c_A.
\end{eqnarray}

\begin{prop}
\label{P3.1}
Let $(\mathbf{w},u)$ be defined by (\ref{9.1.3}), and the assumptions $(\ref{9.6.2})$ be satisfied.
Then, there are a constant $C(M)>0$ and a suitably small constant $\eta>0$, such that for any $t\in(0,T)$,
the following inequality holds.
\begin{eqnarray}
\begin{split}   \label{9.1.4}
&\frac{1}{2} \|\mathbf{w}(t)\|_{L^2(\Omega)}^2+\frac{3}{4} \int_0^t \|\nabla \mathbf{w}\|_{L^2(\Omega)}^2 \mathrm{d}\varsigma\\
\leq& \;\frac{1}{2} \|\mathbf{w}(0)\|_{L^2(\Omega)}^2+C \|\mathbf{w}\|_{L^2(\Omega\times(0,t))}^2+\eta \varepsilon \|\nabla u\|^2_{L^2(\Omega\times(0,t))}\\
&+\eta \frac{ \|\nabla_\tau u\|^2_{L^2(\Omega\times(0,t))}}{\varepsilon }+C\varepsilon \|\nabla u\|_{L^{\infty}(0, t ; L^{2}(\Omega))}^4+C\varepsilon^3 \|\Delta u\|_{L^{2}(\Omega \times(0, t))}^4+C\varepsilon^4 .
\end{split}
\end{eqnarray}
\end{prop}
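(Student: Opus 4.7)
The plan is a standard $L^2$ energy estimate on $\mathbf{w}=\mathbf{v}_\varepsilon-\mathbf{\tilde v}_A$, structured so that the precise powers of $\varepsilon$ in (\ref{9.1.4}) emerge naturally. Subtracting (\ref{2.2}) from (\ref{2.1}) and using $\operatorname{div}\mathbf{v}_\varepsilon=\operatorname{div}\mathbf{\tilde v}_A=0$, the error equation reads
\begin{equation*}
\partial_t\mathbf{w}+(\mathbf{w}\cdot\nabla)\mathbf{v}_\varepsilon+(\mathbf{\tilde v}_A\cdot\nabla)\mathbf{w}-\Delta\mathbf{w}+\nabla(p_\varepsilon-p_A)+\varepsilon\operatorname{div}(\nabla c_\varepsilon\otimes\nabla c_\varepsilon-\nabla c_A\otimes\nabla c_A)=-\mathcal{R},
\end{equation*}
with residual $\mathcal{R}=(\zeta\circ d_\Gamma)\Theta_1+\varepsilon^3\operatorname{div}[(\zeta\circ d_\Gamma)\Theta_2]+\Theta_3+\varepsilon\operatorname{div}\Theta_4$. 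I would test against $\mathbf{w}$: the pressure disappears since $\operatorname{div}\mathbf{w}=0$ and $\mathbf{w}|_{\partial\Omega}=0$; the viscous term produces $\|\nabla\mathbf{w}\|_{L^2}^2$; the piece $\int(\mathbf{\tilde v}_A\cdot\nabla)\mathbf{w}\cdot\mathbf{w}\,dx$ vanishes for the same reason; and writing $\mathbf{v}_\varepsilon=\mathbf{w}+\mathbf{\tilde v}_A$ reduces the remaining convective contribution to $\int(\mathbf{w}\cdot\nabla)\mathbf{\tilde v}_A\cdot\mathbf{w}\,dx\le C\|\mathbf{w}\|_{L^2}^2$, using smoothness of $\mathbf{\tilde v}_A$.

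The capillary difference, after one integration by parts and the pointwise identity $\nabla c_\varepsilon\otimes\nabla c_\varepsilon-\nabla c_A\otimes\nabla c_A=\nabla u\otimes\nabla u+\nabla c_A\otimes\nabla u+\nabla u\otimes\nabla c_A$, yields $-\varepsilon\int(\nabla u\otimes\nabla u+\nabla c_A\otimes\nabla u+\nabla u\otimes\nabla c_A):\nabla\mathbf{w}\,dx$. For the cubic part I combine H\"older with the two-dimensional Gagliardo--Nirenberg inequality of Lemma \ref{GN} ($\|\nabla u\|_{L^4}^2\le C\|\nabla u\|_{L^2}\|\nabla u\|_{H^1}$) and elliptic regularity to bound it by $\tfrac{1}{8}\|\nabla\mathbf{w}\|_{L^2}^2+C\varepsilon^2\|\nabla u\|_{L^2}^2(\|\nabla u\|_{L^2}^2+\|\Delta u\|_{L^2}^2)$; an in-time Young split $\varepsilon^2 ab\le\tfrac12\varepsilon a^2+\tfrac12\varepsilon^3 b^2$ with $a=\|\nabla u\|_{L^\infty(L^2)}^2$ and $b=\|\Delta u\|_{L^2(L^2)}^2$ produces exactly $C\varepsilon\|\nabla u\|_{L^\infty(L^2)}^4+C\varepsilon^3\|\Delta u\|_{L^2(L^2)}^4$. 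For the quadratic pieces, since $\nabla c_A\sim\varepsilon^{-1}\theta_0'(\rho)\mathbf{n}$ is singular only in the normal direction, I decompose $\nabla u=\mathbf{n}\partial_\mathbf{n}u+\nabla_\tau u$: the tangential contribution, via $\varepsilon|\nabla c_A|\le C$ and an $\varepsilon^{-1}$-weighted Young, yields $\eta\|\nabla_\tau u\|_{L^2}^2/\varepsilon+C\varepsilon\|\nabla\mathbf{w}\|_{L^2}^2$, while the normal contribution uses the divergence-free condition on $\mathbf{w}$ together with $\partial_\mathbf{n}\mathbf{n}=0$ (which follows from $\mathbf{n}=\nabla d_\Gamma$ and $|\nabla d_\Gamma|=1$) to rewrite $\mathbf{n}\otimes\mathbf{n}:\nabla\mathbf{w}=\partial_\mathbf{n}(\mathbf{n}\cdot\mathbf{w})=H_{\Gamma_t}(\mathbf{n}\cdot\mathbf{w})-\nabla_\tau\cdot\mathbf{w}_\tau$; after this substitution and a suitable tangential integration by parts, the singular $\varepsilon^{-1}$ factor is neutralized and the contribution is absorbed into $\eta\varepsilon\|\nabla u\|_{L^2}^2+\eta\|\nabla_\tau u\|_{L^2}^2/\varepsilon+C\|\mathbf{w}\|_{L^2}^2+C\varepsilon\|\nabla\mathbf{w}\|_{L^2}^2$.

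The residual $\mathcal{R}$ is treated using the expansion (\ref{12.21.1}) together with (\ref{9.10.1}) and Proposition \ref{Proposition:prop1.4}: the leading piece $r_\varepsilon\in\mathcal{R}_{0,\alpha}^0$ gives $\le C\varepsilon^2\|\mathbf{w}\|_{H^1}$ by the $j=1$ case, the piece $\varepsilon\tilde{r}_\varepsilon\in\mathcal{R}_{0,\alpha}$ gives the same order by the $j=0$ case plus the extra $\varepsilon$, and the $h_2$-polynomial remainders are $O(\varepsilon^2)$ by (\ref{9.6.2}); Young's inequality converts $C\varepsilon^2\|\mathbf{w}\|_{H^1}$ into $\eta\|\nabla\mathbf{w}\|_{L^2}^2+C\|\mathbf{w}\|_{L^2}^2+C\varepsilon^4$. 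The pieces $\Theta_3$ and $\varepsilon\operatorname{div}\Theta_4$ are handled directly by (\ref{4.1.1}) (integrating the divergence by parts to land on $\nabla\mathbf{w}$), and $\varepsilon^3\operatorname{div}[(\zeta\circ d_\Gamma)\Theta_2]$ is controlled by pairing $\Theta_2=\nabla c_0^{in}\otimes\nabla g+\nabla g\otimes\nabla c_0^{in}$ with $\nabla\mathbf{w}$ using (\ref{11.16.1}) and Lemma \ref{corollary:corB.3}. Integrating on $(0,t)$ and absorbing the $\eta$-copies and the $C\varepsilon$-copy of $\int_0^t\|\nabla\mathbf{w}\|_{L^2}^2\,d\varsigma$ into the left-hand side (choosing $\eta$ small and $\varepsilon$ small) produces (\ref{9.1.4}) with coefficient $\tfrac34$ in front of the viscous term. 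The principal obstacle is the normal part of the quadratic capillary contribution, where the singular $|\nabla c_A|\sim\varepsilon^{-1}$ prevents a direct Cauchy--Schwarz closure and forces the structural use of $\operatorname{div}\mathbf{w}=0$; once this cancellation is in place, the remainder is careful bookkeeping of the powers of $\varepsilon$.
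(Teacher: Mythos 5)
Your proposal follows essentially the same route as the paper: testing the error equation for $\mathbf{w}$ against $\mathbf{w}$, killing the pressure and the $(\mathbf{v}_\varepsilon\cdot\nabla)\mathbf{w}$-type transport term via the divergence-free structure, splitting the capillary difference into a cubic term $\nabla u\otimes\nabla u$ treated by Gagliardo--Nirenberg and a Young-in-$\varepsilon$ split, and cross terms treated by the normal/tangential decomposition of $\nabla u$ together with the identity $\mathbf{n}\otimes\mathbf{n}:\nabla\mathbf{w}=\partial_{\mathbf n}\mathbf{w}_{\mathbf n}$ and $\operatorname{div}\mathbf{w}=0$, while the residual $(\zeta\circ d_\Gamma)\Theta_1+\dots$ is controlled by Proposition \ref{Proposition:prop1.4} and (\ref{9.6.2}), (\ref{4.1.1})--(\ref{9.10.1}), (\ref{9.1.1}). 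The paper's own proof is terse and delegates the cubic capillary term to \cite[Thm.\ 4.1]{Fei} and everything else to \cite[Thm.\ 3.5, Prop.\ 3.6]{Liu Y N}, so you are roughly at the same level of rigor, and in places you make the bookkeeping more explicit (e.g.\ the split $\varepsilon^2 ab\le\tfrac12\varepsilon a^2+\tfrac12\varepsilon^3 b^2$ reproducing the $C\varepsilon\|\nabla u\|^4+C\varepsilon^3\|\Delta u\|^4$ structure, and the $j=0$ vs.\ $j=1$ cases of Proposition \ref{Proposition:prop1.4} giving $C\varepsilon^2\|\mathbf{w}\|_{H^1}$ and hence the $C\varepsilon^4$ tail). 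Two minor departures are harmless: you decompose the convective difference as $(\mathbf{w}\cdot\nabla)\mathbf{v}_\varepsilon+(\tilde{\mathbf{v}}_A\cdot\nabla)\mathbf{w}$ rather than the paper's $\mathbf{w}\nabla\tilde{\mathbf{v}}_A+\mathbf{v}_\varepsilon\nabla\mathbf{w}$ (both close the same way), and you keep the curvature contribution $H_{\Gamma_t}\mathbf{w}_{\mathbf n}$ explicit in $\partial_{\mathbf n}\mathbf{w}_{\mathbf n}=H_{\Gamma_t}\mathbf{w}_{\mathbf n}-\nabla_\tau\!\cdot\mathbf{w}_\tau$, which is in fact more precise than the paper's $\partial_{\mathbf n}\mathbf{w}_{\mathbf n}=-\operatorname{div}_\tau\mathbf{w}$. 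The one place where your write-up is no more convincing than the paper's is the absorption of the normal-normal cross piece $\int\theta_0'\,\partial_{\mathbf n}u\,\operatorname{div}_\tau\mathbf{w}$: a plain Cauchy--Schwarz costs $C\varepsilon^{-1}\|\nabla\mathbf{w}\|_{L^2}^2$, so the claimed absorption into $\eta\varepsilon\|\nabla u\|^2+C\varepsilon\|\nabla\mathbf{w}\|^2$ really does need the exponential-decay gain of Proposition \ref{Proposition:prop1.4} (or the matched arguments in \cite{Liu Y N}), which you should invoke explicitly rather than attributing the gain to a ``tangential integration by parts''; the paper has the same gap by reference.
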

\begin{proof}
Recalling that
%%% It is helpful to recall the following formula
\begin{eqnarray}
\begin{split} \label{2.25.1}
\nabla c_A=(\zeta \circ d_\Gamma)\nabla c^{in}_0 +\mathbf{R} =\;(\zeta \circ d_\Gamma)\theta_{0}^{\prime}(\rho)\left(\frac{\mathbf{n}}{\varepsilon}-\nabla^{\Gamma} h_{\varepsilon}(r,s,t)\right)+\mathbf{R}.
\end{split}
\end{eqnarray}
we utilize Lemma \ref{corollary:corB.3} to obtain that
$\mathbf{R}=\nabla (\zeta \circ d_\Gamma)(c^{in}-\chi_+ +\chi_-)
+(\zeta \circ d_\Gamma)(\varepsilon^2 \nabla c^{in}_2+\varepsilon^3 \nabla c^{in}_3)$ and satisfies
\begin{align} \label{9.1.1}
\|\mathbf{R}\|_{L^\infty(0,T;L^2(\Omega))}\le C\varepsilon^\frac{3}{2}\quad \text{ and } \quad \|\mathbf{R}\|_{ L^\infty(\Omega\times(0,T))}\le C\varepsilon.
\end{align}
It follows from (\ref{2.1}), (\ref{2.2}), (\ref{4.1.1}) and (\ref{2.25.1})  that
\begin{eqnarray}
\begin{split} \label{2.3}
&\partial_t\mathbf{w}+\mathbf{w}\nabla \mathbf{{\tilde v}}_A+\mathbf{v}_\varepsilon \nabla \mathbf{w}-\Delta \mathbf{w}+\nabla (p_\varepsilon-p_A) \\
=&\;-(\zeta \circ d_\Gamma)\Theta_1 -\varepsilon \,\operatorname{div}(\nabla u\otimes \nabla u)-\varepsilon (\zeta \circ d_\Gamma)\,\operatorname{div}(\nabla c_0^{in}\otimes \nabla u+\nabla u\otimes \nabla c_0^{in}) \\
&-\varepsilon \,\nabla(\zeta \circ d_\Gamma)(\nabla c_0^{in}\otimes \nabla u+\nabla u\otimes \nabla c_0^{in})-[\varepsilon \operatorname{div} (\mathbf{R}\otimes \nabla u+\nabla u\otimes \mathbf{R}+\Theta_4)+\Theta_3]  \\
&-\varepsilon^3 \operatorname{div}[(\zeta \circ d_\Gamma)(\nabla c_0^{in}\otimes \nabla g+\nabla g \otimes \nabla c_0^{in})].
\end{split}
\end{eqnarray}
The Gagliardo-Nirenberg inequality in Lemma \ref{GN} implies that
\begin{align}
\|\nabla u\|_{L^{4}(\Omega \times(0, t))} \leq C\Big(\|\nabla u\|_{L^{\infty}(0, t ; L^{2}(\Omega))}^{\frac{1}{2}}\|\Delta u\|_{L^{2}(\Omega \times(0, t))}^{\frac{1}{2}}+T_{0}^{\frac{1}{4}}\|\nabla u\|_{L^{\infty}(0, t ; L^{2}(\Omega))}\Big),\label{4.11.1}\\
\|\nabla u\|_{L^2(0,t;L^4(\Gamma_t(2\delta))} \le C\|\nabla u\|_{L^2(\Omega\times(0,t))}^{\frac{1}{2}}\|\Delta u\|_{L^2(\Omega\times(0,t))}^{\frac{1}{2}}+C\|\nabla u\|_{L^2(\Omega\times(0,t))} \notag
\end{align}
for any $t \in (0,T_0]$.

Thus, we can apply the energy method to \eqref{2.3}, namely, we first multiply (\ref{2.3}) by $\mathbf{w}$ and
integrate the resulting equality over $\Omega \times (0,t)$; and then we have to estimate term by term.
Notice that the capillary term $(\nabla u\otimes \nabla u) \nabla \mathbf{w}$ can be bounded by employing a similar argument to that
in the proof of \cite[ Theorem 4.1]{Fei}, while the remaining terms can be handled in a similar way to that used in the proof
of Theorem 3.5 and Proposition 3.6 in \cite{Liu Y N}. Based on the fact that $\nabla u=\nabla_\tau u+\mathbf{n}$ and
$\partial_\mathbf{n} u$, $\mathbf{n} \otimes \mathbf{n}: \nabla  \mathbf{w}=\partial_{\mathbf{n}} \mathbf{w}_{\mathbf{n}}
=-\operatorname{div}_{\tau} \mathbf{w}$,
we arrive at (\ref{9.1.4}) by combining Proposition \ref{Proposition:prop1.4} with (\ref{9.6.2}), (\ref{4.1.1})-(\ref{9.10.1}),
and (\ref{9.1.1}). The details will be omitted for simplicity of presentation.
%%
%% Hence, the proof of Proposition \ref{P3.1} is done. We omit the details for simplicity of presentation.
\end{proof}

\subsection {Estimates of the error equation of the order parameter $c_{\varepsilon}$}
      %%%%%
Let $\mathbf{\tilde{v}}_A$ be defined by (\ref{9.27.1}) and the assumptions (\ref{9.6.2}) be satisfied.
Based on Theorem 4.5 and the proof of \cite[Theorem 1.3]{Liu Y N}, we are able to obtain
\begin{align} \label{12.11.1}
\partial_t \, c_A+\mathbf{\tilde{v}}_A \cdot \nabla c_A-\Delta c_A+\frac{1}{\varepsilon^2}f'(c_A)+\mathbf{w}|_\Gamma \, \nabla c_{A,0}=\mathcal{C}+\mathbf{w}|_\Gamma \, \mathbf{Q},
\end{align}
where $c_{A,0}=\zeta \circ d_{\Gamma} c^{i n}_0(x, t)+(1-\zeta \circ d_{\Gamma})(c_{+}^{o u t} \chi_{+}+c_{-}^{o u t} \chi_{-})$.
Moreover, the following desired estimates hold.
\begin{align} \label{4.12.1}
\|\mathcal{C}\|_{L^{2}\left(\Gamma_{t}(2 \delta)\times(0,T)\right)} \leq C \varepsilon^{\frac{5}{2}} \quad \; \text{ and } \quad\;  \|\mathbf{Q}\|_{L^\infty(0,T;L^2(\Gamma_{t}(2 \delta)))}\le C \varepsilon^\frac{5}{2},
\end{align}
where $C$ is a positive constant depending only on $M$.

Next, we come to estimate the error function of the order parameter $c_\varepsilon$. From (\ref{10.19.1}) and (\ref{12.11.1}) we get
\begin{eqnarray} \label{eq:3.1}
\partial_tu+\mathbf{w}\nabla c_A-\mathbf{w}|_\Gamma \, \nabla c_{A,0}+\mathbf{v}_\varepsilon\nabla u-\Delta u+\frac{1}{\varepsilon^2}[f'(c_\varepsilon)-f'(c_A)]=-\mathcal{C}-\mathbf{w}|_\Gamma \, \mathbf{Q}.
\end{eqnarray}

In this subsection, the main task is to prove the following proposition.
\begin{prop}
\label{P3.2}
Under the a priori assumptions $(\ref{9.6.2})$, there exists a generic constant $C(M)$, such that for any $t \in (0,T)$,
the solution to (\ref{eq:3.1}) satisfies the following estimate:
\begin{eqnarray}
\begin{split}  \label{9.1.5}
&\frac{1}{2\varepsilon}\, \|u(t)\|_{L^2(\Omega)}^2+\frac{3}{4\varepsilon}\|\nabla_\tau u\|_{L^2(\Omega\times(0,t))}^2 +\frac{3}{4}\varepsilon \|\nabla u\|^2_{L^2(\Omega\times(0,t))}\\
\le &\; \frac{1}{2\varepsilon}\|u(0)\|_{L^2(\Omega)}^2+C \|\mathbf{w}\|_{L^2(\Omega\times(0,t))}^2+ \eta\|\nabla\mathbf{w}\|_{L^2(\Omega\times(0,t))}^2 +C \frac{\|u\|_{L^2(\Omega\times(0,t))}^2}{\varepsilon}\\
&+C\,T^\frac{1}{2}\frac{\|u\|_{L^\infty(0,t;L^2(\Omega))}^4}{\varepsilon^6}+ C\varepsilon^4,
\end{split}
\end{eqnarray}
for a suitably small constant $\eta$.
\end{prop}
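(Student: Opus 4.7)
The natural plan is to test equation (\ref{eq:3.1}) against $u/\varepsilon$ and integrate over $\Omega\times(0,t)$. Since $\mathbf{v}_\varepsilon$ is divergence free and $u$ vanishes on $\partial\Omega$, the convective term $\int \mathbf{v}_\varepsilon\cdot\nabla u\,(u/\varepsilon)$ drops out; integration by parts on $-\Delta u\cdot(u/\varepsilon)$ produces $\varepsilon^{-1}\|\nabla u\|_{L^2}^2$; and the time derivative contributes $\frac{1}{2\varepsilon}(\|u(t)\|_{L^2}^2-\|u(0)\|_{L^2}^2)$. For the nonlinear term, the polynomial structure of $f(c)=\tfrac18(1-c^2)^2$ yields the exact expansion
\begin{align*}
f'(c_\varepsilon)-f'(c_A)=f''(c_A)\,u+\tfrac12 f'''(c_A)\,u^2+\tfrac16 f''''(c_A)\,u^3,
\end{align*}
which isolates a quadratic piece $\varepsilon^{-3}\int f''(c_A)u^2$ that combines naturally with the Laplacian contribution, plus cubic and quartic residues to be handled separately.

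To extract all three coercive quantities on the left of (\ref{9.1.5}) I would use the splitting suggested in the introduction,
\begin{align*}
\tfrac{1}{\varepsilon}\!\left(|\nabla u|^2+\varepsilon^{-2}f''(c_A)u^2\right)=\tfrac{1-\varepsilon^2}{\varepsilon}\!\left(|\nabla u|^2+\varepsilon^{-2}f''(c_A)u^2\right)+\varepsilon\!\left(|\nabla u|^2+\varepsilon^{-2}f''(c_A)u^2\right).
\end{align*}
The spectral estimate of Proposition \ref{theorem:thm2.10} applied to the first bracket gives the lower bound $\tfrac{1-\varepsilon^2}{\varepsilon}\|\nabla_\tau u\|_{L^2}^2 - C\varepsilon^{-1}\|u\|_{L^2}^2$, while the $\varepsilon$-weighted bracket produces $\varepsilon\|\nabla u\|_{L^2}^2$ together with an extra $C\varepsilon^{-1}\|u\|_{L^2}^2$ term after using $\|f''(c_A)\|_{L^\infty}\le C$. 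All unwanted $C\varepsilon^{-1}\|u\|^2$ contributions are shifted to the right and combine into the $C\|u\|_{L^2(\Omega\times(0,t))}^2/\varepsilon$ of (\ref{9.1.5}), while the coercive coefficients $\tfrac{1-\varepsilon^2}{\varepsilon}$ and $\varepsilon$ produce the stated $\tfrac{3}{4\varepsilon}\|\nabla_\tau u\|^2$ and $\tfrac{3}{4}\varepsilon\|\nabla u\|^2$ for $\varepsilon$ sufficiently small.

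For the coupling terms I would decompose $\mathbf{w}\,\nabla c_A-\mathbf{w}|_\Gamma\,\nabla c_{A,0}=(\mathbf{w}-\mathbf{w}|_\Gamma)\nabla c_A+\mathbf{w}|_\Gamma(\nabla c_A-\nabla c_{A,0})$. Using the structure (\ref{2.25.1}) and the smallness (\ref{9.1.1}) of $\mathbf{R}$, the first piece tested against $u/\varepsilon$ is controlled via the elementary estimate $|\mathbf{w}(x)-\mathbf{w}(P_{\Gamma_t}(x))|\le|d_\Gamma|^{1/2}\bigl(\int_0^{d_\Gamma}|\partial_r\mathbf{w}|^2\,dr\bigr)^{1/2}$ together with Proposition \ref{Proposition:prop1.4}, producing $\eta\|\nabla\mathbf{w}\|_{L^2}^2+C\|\mathbf{w}\|_{L^2}^2$ after Young's inequality; the extra factor $|d_\Gamma|^{1/2}$ is exactly what balances the $\varepsilon^{-1}$ singularity of $\theta_0'(\rho)/\varepsilon$ in $\nabla c^{in}_0$. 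The second piece is of size $\varepsilon^2$ already, by Lemma \ref{corollary:corB.3}, and contributes $C\varepsilon^4$. The source $\mathcal{C}$ and the boundary term $\mathbf{w}|_\Gamma\,\mathbf{Q}$ are similarly estimated through (\ref{4.12.1}) and Proposition \ref{Proposition:prop1.4}, producing $C\varepsilon^4+\eta\|\nabla\mathbf{w}\|_{L^2}^2+C\|\mathbf{w}\|_{L^2}^2$.

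The cubic and quartic remainders left over from the Taylor expansion give a contribution of the schematic form $\varepsilon^{-3}\int_0^t\!\int(|u|^3+u^4)\,dx\,d\tau$. By the 2D Gagliardo--Nirenberg inequality of Lemma \ref{GN}, $\|u\|_{L^4}^4\le C\|u\|_{L^2}^2\|\nabla u\|_{L^2}^2$, followed by H\"older's inequality in time to generate the factor $T^{1/2}$ and Young's inequality to push the $\|\nabla u\|_{L^2}^2$ onto $\eta\varepsilon\|\nabla u\|_{L^2}^2$ and a term proportional to $\|u\|_{L^\infty(0,t;L^2)}^4/\varepsilon^6$, one arrives precisely at the last term $CT^{1/2}\|u\|_{L^\infty(0,t;L^2)}^4/\varepsilon^6$ on the right of (\ref{9.1.5}). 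I expect the main technical obstacle to be the $(\mathbf{w}-\mathbf{w}|_\Gamma)\nabla c_A$ contribution: the $\varepsilon^{-1}$ singularity of $\nabla c_A$ across the diffuse interface must be compensated exactly by the vanishing of $\mathbf{w}-\mathbf{w}|_\Gamma$ on $\Gamma$, and a careful bookkeeping of $\varepsilon$-powers through the stretched-variable change of coordinates in $\Gamma_t(2\delta)$ is required to recover the claimed scaling without losing a factor of $\varepsilon$.
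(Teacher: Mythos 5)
Your overall architecture matches the paper's: test \eqref{eq:3.1} against $u/\varepsilon$, isolate $f''(c_A)u$, split the quadratic form as $\frac{1-\varepsilon^2}{\varepsilon}(\cdots)+\varepsilon(\cdots)$, apply the spectral estimate of Proposition \ref{theorem:thm2.10} to the first piece to harvest $\frac{1}{\varepsilon}\|\nabla_\tau u\|^2$, keep $\varepsilon\|\nabla u\|^2$ from the second, and decompose the coupling term $\mathbf{w}\nabla c_A-\mathbf{w}|_\Gamma\nabla c_{A,0}$ using the Lipschitz-in-$r$ bound for $\mathbf{w}-\mathbf{w}|_\Gamma$ together with Proposition \ref{Proposition:prop1.4}. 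All of that is consistent with the paper, which imports these computations from \cite[Lemmas 5.1 and 5.3]{Liu Y N}.

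However, there is a genuine gap in your treatment of the cubic remainder $\frac{1}{\varepsilon^3}\int c_A u^3$, and it is precisely the place where the $\varepsilon^{-6}$ in \eqref{9.1.5} is decided. You invoke the global 2D Gagliardo--Nirenberg estimate, which at cubic level gives $\|u\|_{L^3}^3\le C\|u\|_{L^2}^2\|\nabla u\|_{L^2}$, so after time-H\"older and Young against the available coercive piece $\eta\varepsilon\|\nabla u\|^2_{L^2(\Omega\times(0,t))}$ you obtain a residue of size $\frac{t}{\varepsilon^7}\|u\|_{L^\infty L^2}^4$. That is one full power of $\varepsilon$ worse than the stated $\frac{T^{1/2}}{\varepsilon^6}\|u\|^4$, and it does not close the bootstrap: with the a priori bound $\|u\|_{L^\infty L^2}\le R\varepsilon^{5/2}$ from \eqref{4.1}, your residue is $O(\varepsilon^3)$ rather than the required $O(\varepsilon^4)$, so the Gronwall step in Section 4 fails. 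The paper avoids this loss by using, in place of global Gagliardo--Nirenberg, the anisotropic interpolation inequality quoted from \cite[Lemma 3.9]{Abels 4},
\begin{equation*}
\|u\|_{L^{3}\left(\Gamma_{t}(2\delta)\right)}^{3} \le C \|(\nabla_\tau u, u)\|_{L^2(\Gamma_t(2\delta))}^{\frac{1}{2}}\|(\partial_\mathbf{n} u, u)\|_{L^2(\Gamma_t(2\delta))}^{\frac{1}{2}}\|u\|_{L^2(\Gamma_t(2\delta))}^2,
\end{equation*}
which replaces $\|\nabla u\|$ by the geometric mean $\|\nabla_\tau u\|^{1/2}\|\partial_{\mathbf{n}}u\|^{1/2}$. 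Because the tangential gradient enjoys the strong coercivity $\frac{1}{\varepsilon}\|\nabla_\tau u\|^2$ from the spectral estimate while the full gradient only enjoys $\varepsilon\|\nabla u\|^2$, absorbing $\|\nabla_\tau u\|^{1/2}\|\nabla u\|^{1/2}$ by Young costs $(\varepsilon^{-1})^{1/2}\cdot\varepsilon^{1/2}=\varepsilon^0$, whereas absorbing a bare $\|\nabla u\|$ costs $\varepsilon^{-1}$. Exploiting this tangential/normal asymmetry is the mechanism that produces $\varepsilon^{-6}$ rather than $\varepsilon^{-7}$, and it cannot be recovered by the isotropic 2D interpolation you propose. (By contrast, the quartic remainder $\frac{1}{\varepsilon^3}\int u^4$ is indeed absorbable via the global $L^4$ Gagliardo--Nirenberg once the a priori smallness of $\|u\|_{L^\infty L^2}$ is invoked, so that part of your argument is fine; it is specifically the cubic term that requires the localized anisotropic estimate.)
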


\begin{proof}
To prove this proposition, it is convenient to rewrite (\ref{eq:3.1}) as the following form:
\begin{eqnarray}
\begin{split} \label{4.11.2}
&\partial_t u+\mathbf{v}_\varepsilon\nabla u-\Delta u+\frac{1}{\varepsilon^2}f''(c_A)\,u\\
=&-\frac{1}{\varepsilon^2}[f'(c_\varepsilon)-f'(c_A)-f''(c_A)\,u]-\mathcal{C}-[\mathbf{w} \nabla c_A-\mathbf{w}|_\Gamma \, \nabla c_{A,0}]-\mathbf{w}|_\Gamma \, \mathbf{Q}.
\end{split}
\end{eqnarray}
  Now, we multiply (\ref{4.11.2}) by $u/\varepsilon$ in $L^2$ and integrate by parts to get
\begin{align*}
& \frac{1}{2\varepsilon} \|u(t)\|_{L^2(\Omega)}^2-\frac{1}{2\varepsilon} \|u(0)\|_{L^2(\Omega)}^2+\int_0 ^t \!\int_\Omega \mathbf{v}_\varepsilon \nabla(\frac{1}{2\varepsilon}u^2)\,\mathrm{d}x\,\mathrm{d}\varsigma+\frac{1}{\varepsilon}\int_0 ^t \!\int_\Omega |\nabla u|^2+\frac{1}{\varepsilon^2}f''(c_A)\,u^2\,\mathrm{d}x\,\mathrm{d}\varsigma\\
=&\int_0 ^t \!\int_\Omega \{ -\frac{1}{\varepsilon^2}[f'(c_\varepsilon)-f'(c_A)-f''(c_A)\,u]-\mathcal{C}-[\mathbf{w} \nabla c_A-\mathbf{w}|_\Gamma \, \nabla c_{A,0}]-\mathbf{w}|_\Gamma \, \mathbf{Q}\} \times \frac{u}{\varepsilon} \,\mathrm{d}x\,\mathrm{d}\varsigma.
\end{align*}
By the following decomposition:
\begin{align*}
&\frac{1}{\varepsilon}\int_0 ^t \!\int_\Omega|\nabla u|^2+\frac{1}{\varepsilon^2} f''(c_A) u^2 \,\mathrm{d}x\,\mathrm{d}\varsigma\\
=&\frac{1-\varepsilon^2}{\varepsilon}\int_0 ^t \!\int_\Omega|\nabla u|^2+\frac{1}{\varepsilon^2} f''(c_A) u^2 \,\mathrm{d}x\,\mathrm{d}\varsigma+\varepsilon \int_0 ^t \!\int_\Omega|\nabla u|^2+\frac{1}{\varepsilon^2} f''(c_A) u^2 \,\mathrm{d}x\,\mathrm{d}\varsigma,
\end{align*}
where the second term on the right hand side will be used to cancel the term of $\eta \varepsilon\|\nabla u\|_{L^2(\Omega\times(0,t))}^2$ in (\ref{9.1.4}).

Recalling (\ref{2.2}) and the definition of $c_{A,0}$, we see that to control $\nabla c_{A,0}$,
it suffices to estimate $\nabla c_{A,0}$ for the case $|d|< 2\delta$. For $|d|< 2\delta$, we have
\begin{align}
\nabla c_A&=\nabla c_{A,0} +\nabla [(\zeta \circ d_\Gamma)(\varepsilon^2 c^{in}_2+\varepsilon^3 c^{in}_3)]\triangleq\;\nabla c_{A,0} +\tilde{\mathbf{Q}},\notag\\
\nabla c_{A,0}&\triangleq\;(\zeta \circ d_\Gamma)\nabla c^{in}_0 +\mathbf{Q}=\;(\zeta \circ d_\Gamma)\theta_{0}^{\prime}(\rho)\left(\frac{\mathbf{n}}{\varepsilon}-\nabla^{\Gamma} h_{\varepsilon}(r,s,t)\right)+\mathbf{Q}\label{12.21.3},
\end{align}
where $\mathbf{Q}$ and $\tilde{\mathbf{Q}}$ satisfy
\begin{align} \label{12.21.2}
\|(\mathbf{Q},\varepsilon \tilde{\mathbf{Q}})\|_{L^\infty(0,T;L^2(\Gamma_{t}(2 \delta)))}\le C \varepsilon^\frac{5}{2} \quad\text{and}
\quad \|(\mathbf{Q},\varepsilon \tilde{\mathbf{Q}})\|_{ L^\infty(\Gamma_{t}(2 \delta)\times(0,T))}\le C\varepsilon^2
\end{align}
due to Lemma \ref{corollary:corB.3}. By Lemma 3.9 in \cite{Abels 4}, we obtain
$$
\|u\|_{L^{3}\left(\Gamma_{t}\left(2\delta\right)\right)}^{3} \leq  C \|(\nabla_\tau u, u)\|_{L^2(\Gamma_t(2\delta))}^\frac{1}{2} \,\|(\partial_\mathbf{n} u, u)\|_{L^2(\Gamma_t(2\delta))}^\frac{1}{2} \,\|u\|_{L^2(\Gamma_t(2\delta))}^2.
$$
Furthermore, according to the divergence-free condition, we have
$\partial_{\mathbf{n}} \mathbf{w}_{\mathbf{n}}+\operatorname{div}_{\tau} \mathbf{w}=\operatorname{div} \mathbf{w}=0$.

 Keeping in mind that
$\operatorname{div}\mathbf{v}_\varepsilon=0$, and employing the trace theorem, Proposition \ref{theorem:thm2.10}, and
(\ref{4.12.1}), (\ref{4.11.2}), (\ref{12.21.3}) and (\ref{12.21.2}), we can adopt calculations similar to those used
in the proof of \cite[Lemmas 5.1 and 5.3]{Liu Y N} to deduce (\ref{9.1.5}).
 %%
 %% by employing the trace Theorem, Proposition \ref{theorem:thm2.10}, (\ref{4.12.1}), (\ref{4.11.2}), (\ref{12.21.3}) and (\ref{12.21.2}).
This completes the proof.
\end{proof}

\subsection{Estimate of derivatives of solutions to the error equations}
As aforementioned, to handle the capillary term $\operatorname{div}(\nabla c_{\varepsilon} \otimes \nabla c_{\varepsilon})$,
it suffices to derive the estimates of the derivatives.
However, there is no desired spectral estimate as in Proposition \ref{theorem:thm2.10} for the estimates of the derivatives, we have to
to deal with the singular term of $\frac{1}{\varepsilon^{2}}[f'(c_\varepsilon)-f'(c_A)]$ directly.
To this end, Abels and Fei multiplied the equation of the error function for $u$ by $\Delta u$ in \cite{Fei}.
Instead in this paper, we come up with the new multiplier
$\mu_{\varepsilon}-\mu_A=-\varepsilon \Delta c_{\varepsilon}+\frac{1}{\varepsilon} f^{\prime}(c_{\varepsilon})
+\varepsilon \Delta c_{A}-\frac{1}{\varepsilon} f^{\prime}(c_{A})$
for estimating the error of the derivatives. This leads to estimating the term
$\frac{1}{\varepsilon^{2}}[f'(c_\varepsilon)-f'(c_A)] \partial_t u$. Intuitively, the second order derivative of $\Delta u$ leads
to a singular factor of $\varepsilon^{-2}$, while the first order derivative of $\partial_t u$ only produces $\varepsilon^{-1}$,
which will improve the estimate of the error function $\nabla (c_{\varepsilon}-c_A)$ by $\varepsilon^{\frac{1}{2}}$.
This is one of the key observations in this paper.

For this purpose, the error equation (\ref{eq:3.1}) is rewritten as
\begin{align} \label{2.26.2}
\partial_t u+(\mathbf{v}_\varepsilon \nabla c_{\varepsilon}-\mathbf{\tilde{v}}_A \nabla c_A)+\frac{\mu_{\varepsilon}-\mu_A}{\varepsilon}=\mathbf{w}|_\Gamma \, \nabla c_{A,0}-\mathbf{w}|_\Gamma \, \mathbf{Q}-\mathcal{C}.
\end{align}
It suffices to prove the following proposition.
\begin{prop}
\label{P3.3}
Let $(\mathbf{w},u)$ be defined by (\ref{9.1.3}) and the a priori assumptions $(\ref{9.6.2})$ be satisfied.
Then there is a generic constant $C(M)>0$ independent of $\varepsilon$, such that for any $t \in (0,T)$, the following estimate holds.
\begin{eqnarray}
\begin{split}     \label{9.1.6}
&\frac{\varepsilon^2}{2}\, \|\nabla u(t)\|_{L^2(\Omega)}^2+\frac{3}{4}\| \mu_{\varepsilon}-\mu_A\|^2_{L^2(\Omega\times(0,t))} +\frac{3}{4}\varepsilon^3 \| \Delta u\|^2 _{L^2(\Omega\times(0,t))} \\
&+\frac{1}{8}\| u(t)\|_{L^4(\Omega)}^4-\frac{1}{4}\| u(t)\|_{L^2(\Omega)}^2+\frac{3}{4} \| (c_A u)(t)\|_{L^2(\Omega)}^2+\frac{1}{\varepsilon}\| f'(c_\varepsilon)-f'(c_A)\|^2_{L^2(\Omega\times(0,t))} \\
\le&\; \frac{\varepsilon^2}{2} \|\nabla u(0)\|_{L^2(\Omega)}^2+\frac{1}{8}\| u(0)\|_{L^4(\Omega)}^4-\frac{1}{4}\| u(0)\|_{L^2(\Omega)}^2+\frac{3}{4} \| (c_A u)(0)\|_{L^2(\Omega)}^2+\frac{1}{2} \| (c_A u^3)(0)\|_{L^1(\Omega)}\\
&+\;C \|\mathbf{w}\|_{L^2(\Omega\times(0,t))}^2+\eta \|\nabla\mathbf{w}\|_{L^2(\Omega\times(0,t))}^2\\
&+C(\varepsilon^4\|\nabla u\|_{L^{\infty}(0, t ; L^{2}(\Omega))}^2 \|\Delta u\|_{L^{2}(\Omega \times(0, t))}^2+T_{0}\varepsilon^4\|\nabla u\|_{L^{\infty}(0, t ; L^{2}(\Omega))}^4\\
&\quad\quad+\varepsilon^2\|\nabla u\|_{L^{\infty}(0, t ; L^{2}(\Omega))} \|\Delta u\|_{L^{2}(\Omega \times(0, t))} +T_{0}\varepsilon^2\|\nabla u\|_{L^{\infty}(0, t ; L^{2}(\Omega))}^2)\|\mathbf{w}\|_{L^{\infty}(0, t ; L^{2}(\Omega))}^2\\
&+C\varepsilon (\|u\|_{L^{\infty}(0, t ; L^{2}(\Omega))}+\|\nabla u\|_{L^{\infty}(0, t ; L^{2}(\Omega))})\frac{\|u\|_{L^{\infty}(0, t ; L^{2}(\Omega))}^2}{\varepsilon}\\
&+C( 1+\|u\|_{L^\infty(0,t;L^2(\Omega))}+\frac{\|u\|_{L^\infty(0,t;L^2(\Omega))}^2}{\varepsilon^2})\frac{\|u\|_{L^2(\Omega\times(0,t))}^2}{\varepsilon}
+\eta \varepsilon \|\nabla u\|^2_{L^2(\Omega\times(0,t))}+C\varepsilon^7
\end{split}
\end{eqnarray}
for a suitably small $\eta>0$.
\end{prop}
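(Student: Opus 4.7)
The plan is to test the rewritten error equation \eqref{2.26.2} against the new multiplier $\varepsilon(\mu_{\varepsilon}-\mu_{A})=-\varepsilon^{2}\Delta u+(f'(c_{\varepsilon})-f'(c_{A}))$ and integrate over $\Omega\times(0,t)$. As stressed in the introduction, this avoids the highly singular product $\varepsilon^{-2}(f'(c_{\varepsilon})-f'(c_{A}))\Delta u$ that the Abels--Fei choice of $\Delta u$ would force; in its place one meets only $\varepsilon^{-1}(f'(c_{\varepsilon})-f'(c_{A}))\partial_{t}u$, which, thanks to the polynomial form $f(c)=\tfrac18(1-c^{2})^{2}$ and the Taylor identity $f'(c_{\varepsilon})-f'(c_{A})=f''(c_{A})u+\tfrac32 c_{A}u^{2}+\tfrac12 u^{3}$, is an \emph{exact} time derivative of $\tfrac34(c_{A}u)^{2}-\tfrac14 u^{2}+\tfrac12 c_{A}u^{3}+\tfrac18 u^{4}$ modulo smooth $\partial_{t}c_{A}$-remainders. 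This algebraic mechanism is what delivers the $\varepsilon^{1/2}$-improvement announced in the main theorem.

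I would first unwind the time-derivative pairing: integration by parts in space on $-\varepsilon^{2}\Delta u\,\partial_{t}u$ (with vanishing boundary contribution from $u|_{\partial\Omega}=0$) produces the $\tfrac{\varepsilon^{2}}{2}\|\nabla u(t)\|_{L^{2}}^{2}$ term, while the Taylor calculation supplies the $\tfrac{1}{8}\|u(t)\|_{L^{4}}^{4}-\tfrac{1}{4}\|u(t)\|_{L^{2}}^{2}+\tfrac{3}{4}\|(c_{A}u)(t)\|_{L^{2}}^{2}$ block; the endpoint cubic $\tfrac{1}{2}\int c_{A}u^{3}(t)$ is kept under control by the positivity identity $\tfrac{3}{4}(c_{A}u)^{2}+\tfrac{1}{2}c_{A}u^{3}+\tfrac{1}{8}u^{4}=\tfrac{3}{4}(c_{A}u+\tfrac{u^{2}}{3})^{2}+\tfrac{u^{4}}{24}$, and the $\partial_{t}c_{A}$-remainders are absorbed into the $\|u\|_{L^{2}}^{2}/\varepsilon$ block on the right using $\|\partial_{t}c_{A}\|_{L^{\infty}}\le C(M)$ and Young's inequality. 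The parabolic block $\tfrac{\mu_{\varepsilon}-\mu_{A}}{\varepsilon}\cdot\varepsilon(\mu_{\varepsilon}-\mu_{A})=(\mu_{\varepsilon}-\mu_{A})^{2}$ then yields $\|\mu_{\varepsilon}-\mu_{A}\|_{L^{2}(\Omega\times(0,t))}^{2}$ on the left. To extract the separate contributions $\tfrac{3}{4}\varepsilon^{3}\|\Delta u\|^{2}$ and $\varepsilon^{-1}\|f'(c_{\varepsilon})-f'(c_{A})\|^{2}$ required by the proposition I would expand
\[
(\mu_{\varepsilon}-\mu_{A})^{2}=\varepsilon^{2}(\Delta u)^{2}+\tfrac{1}{\varepsilon^{2}}(f'(c_{\varepsilon})-f'(c_{A}))^{2}-2\Delta u\,(f'(c_{\varepsilon})-f'(c_{A})),
\]
integrate the cross product by parts (boundary term zero because $f'(\pm1)=0$ and $c_{\varepsilon}=c_{A}=-1$ on $\partial\Omega$) into $2f''(c_{\varepsilon})|\nabla u|^{2}+2(f''(c_{\varepsilon})-f''(c_{A}))\nabla u\cdot\nabla c_{A}$, and redistribute powers of $\varepsilon$; the cross integrals are then absorbed using $f''\ge-\tfrac12$ together with Proposition~\ref{theorem:thm2.10}, $|f''(c_{\varepsilon})-f''(c_{A})|\le 3|u|$, and the localisation/decay of $\nabla c_{A}$ furnished by (\ref{12.21.3}), Lemma~\ref{corollary:corB.3} and Proposition~\ref{Proposition:prop1.4}.

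Finally I would treat the transport difference $\mathbf{v}_{\varepsilon}\cdot\nabla c_{\varepsilon}-\mathbf{\tilde v}_{A}\cdot\nabla c_{A}=\mathbf{v}_{\varepsilon}\cdot\nabla u+\mathbf{w}\cdot\nabla c_{A}$ paired with $\varepsilon(\mu_{\varepsilon}-\mu_{A})$. The viscous-convective piece $\varepsilon^{2}\int\mathbf{v}_{\varepsilon}\cdot\nabla u\,\Delta u$ would be estimated via the Gagliardo--Nirenberg bound (\ref{4.11.1}) on $\|\nabla u\|_{L^{4}}$, the incompressibility of $\mathbf{v}_{\varepsilon}$ and Young's inequality to absorb a small multiple of $\varepsilon^{3}\|\Delta u\|^{2}$ back into the left, producing the fourth-power remainders $\varepsilon^{4}\|\nabla u\|_{L^{\infty}L^{2}}^{2}\|\Delta u\|^{2}\|\mathbf{w}\|_{L^{\infty}L^{2}}^{2}$ stated on the right; while the capillary-type piece $\varepsilon\int\mathbf{w}\cdot\nabla c_{A}\,(\mu_{\varepsilon}-\mu_{A})$, after inserting (\ref{12.21.3}) and invoking Lemma~\ref{corollary:corB.3} together with Proposition~\ref{Proposition:prop1.4}, is bounded by $C\|\mathbf{w}\|_{L^{2}}^{2}+\eta\|\nabla\mathbf{w}\|_{L^{2}}^{2}$. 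The source terms $-\mathbf{w}|_{\Gamma}\cdot\nabla c_{A,0}$, $-\mathbf{w}|_{\Gamma}\cdot\mathbf{Q}$ and $-\mathcal{C}$ contribute the $C\varepsilon^{7}$ and $\varepsilon^{4}$-remainders via (\ref{4.12.1}), the trace theorem and Young. The principal obstacle is the simultaneous bookkeeping of the two cross integrals coming from the $(\mu_{\varepsilon}-\mu_{A})^{2}$ expansion together with the Navier--Stokes transport $\varepsilon^{2}\mathbf{v}_{\varepsilon}\cdot\nabla u\,\Delta u$: each individually threatens a factor $\varepsilon^{-1}$ more singular than can be tolerated, and it is precisely the replacement of $\Delta u$ by $\mu_{\varepsilon}-\mu_{A}$ as multiplier that confines every absorption within the thresholds prescribed by the proposition.
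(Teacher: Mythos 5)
Your central insight is exactly right: the paper replaces the $\Delta u$ multiplier of Abels--Fei by $\varepsilon(\mu_\varepsilon-\mu_A)$, so that the problematic product $\varepsilon^{-2}(f'(c_\varepsilon)-f'(c_A))\Delta u$ is traded for $\varepsilon^{-1}(f'(c_\varepsilon)-f'(c_A))\partial_t u$, and the latter is an exact time derivative because $f$ is quartic. Your unwinding of $\varepsilon\int\partial_t u\,(\mu_\varepsilon-\mu_A)$ into the gradient energy plus the $\tfrac18\|u\|_{L^4}^4-\tfrac14\|u\|_{L^2}^2+\tfrac34\|c_Au\|_{L^2}^2$ block is the same algebra the paper uses, and so is the $\|\partial_t c_A\|_{L^\infty}\lesssim\varepsilon^{-1}$ absorption of the commutator remainders ($J_1$ in the paper).

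However, your treatment of the cross term in $(\mu_\varepsilon-\mu_A)^2$ breaks the absorption budget. The paper isolates the cross term with an $\varepsilon$-prefactor, $J_2=2\varepsilon\int[f'(c_\varepsilon)-f'(c_A)]\Delta u$, and closes it by the mean value theorem and Young: $|J_2|\le C\varepsilon\|u\|_{L^2}\|\Delta u\|_{L^2}\le C\varepsilon^{-1}\|u\|_{L^2}^2+\eta\varepsilon^3\|\Delta u\|_{L^2}^2$, which fits under the $\tfrac34\varepsilon^3\|\Delta u\|^2$ on the left with $\eta$ arbitrarily small. You instead integrate by parts to get $-2\varepsilon\int f''(c_\varepsilon)|\nabla u|^2-2\varepsilon\int\bigl(f''(c_\varepsilon)-f''(c_A)\bigr)\nabla u\cdot\nabla c_A$. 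The first piece is of size $\varepsilon\|\nabla u\|^2_{L^2(\Omega\times(0,t))}$ with an $O(1)$ coefficient (since $\sup|f''|\approx1$ when $|c|\le1$), not an $\eta$-small coefficient: it cannot be absorbed by the $\eta\varepsilon\|\nabla u\|^2$ budgeted on the right of the proposition, and in the final Gronwall step Proposition \ref{P3.2} only contributes $\tfrac34\varepsilon\|\nabla u\|^2$ to absorb all such terms. The lower bound $f''\ge-\tfrac12$ controls only one sign of this integral, and Proposition \ref{theorem:thm2.10} does not apply here — its quadratic form requires $\varepsilon^{-2}f''(c_A)u^2$ paired with $|\nabla u|^2$ at the same $\varepsilon$-scaling, which is not what you have after this IBP. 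To salvage your route you would need an extra interpolation step $\|\nabla u\|^2\lesssim\|u\|\,\|\Delta u\|$ before applying Young, at which point you have effectively reproduced the paper's direct H\"older estimate with extra work. The simpler H\"older/Young bound is the correct closure of $J_2$.

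One secondary point: your positivity identity $\tfrac34(c_Au)^2+\tfrac12 c_Au^3+\tfrac18u^4=\tfrac34(c_Au+\tfrac{u^2}{3})^2+\tfrac{u^4}{24}$ is true but not what the proposition states. The proposition keeps $\tfrac18\|u(t)\|_{L^4}^4-\tfrac14\|u(t)\|_{L^2}^2+\tfrac34\|(c_Au)(t)\|_{L^2}^2$ on the left and pushes $\tfrac12\int c_Au^3(t)$ to the right, bounded via H\"older and Gagliardo--Nirenberg as $C\varepsilon(\|u\|_{L^\infty L^2}+\|\nabla u\|_{L^\infty L^2})\varepsilon^{-1}\|u\|_{L^\infty L^2}^2$; retaining the cubic term on the left would yield a formally different inequality. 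Your decomposition of the transport difference and of $(\mu_\varepsilon-\mu_A)^2$ into separate $\varepsilon^2\Delta u$ and $f'$-pairings is also a harmless variant but adds bookkeeping the paper avoids by keeping $\mu_\varepsilon-\mu_A$ as a single object.
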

\begin{proof}
It is easy to check that
\begin{align*}
&\int_\Omega (\mu_{\varepsilon}-\mu_A)^2 \,\mathrm{d}x=(1-\varepsilon)\int_\Omega (\mu_{\varepsilon}-\mu_A)^2 \,
\mathrm{d}x+\varepsilon\int_\Omega (\mu_{\varepsilon}-\mu_A)^2 \,\mathrm{d}x \\
=&(1-\varepsilon)\| \mu_{\varepsilon}-\mu_A\|^2_{L^2(\Omega)} +\varepsilon^3 \| \Delta u\|^2 _{L^2(\Omega)}
+\frac{1}{\varepsilon}\| f'(c_\varepsilon)-f'(c_A)\|^2_{L^2(\Omega)} -2\varepsilon \int_\Omega [f'(c_\varepsilon)-f'(c_A)]
\Delta u\,\mathrm{d}x.
\end{align*}
Then, we multiply the equation (\ref{2.26.2}) by $\varepsilon (\mu_{\varepsilon}-\mu_A)$ in $L^2$ and integrate by parts to infer that
\begin{eqnarray}
\begin{split} \label{12.15.1}
&\varepsilon \int_\Omega \partial_t u (\mu_{\varepsilon}-\mu_A)\,\mathrm{d}x+(1-\varepsilon)\| \mu_{\varepsilon}-\mu_A\|^2_{L^2(\Omega)}
+\varepsilon^3 \| \Delta u\|^2 _{L^2(\Omega)} +\frac{1}{\varepsilon}\| f'(c_\varepsilon)-f'(c_A)\|^2_{L^2(\Omega)} \\
=&\,2\varepsilon \int_\Omega [f'(c_\varepsilon)-f'(c_A)]\Delta u\,\mathrm{d}x-\varepsilon \int_\Omega (\mathbf{v}_\varepsilon
\nabla c_{\varepsilon}-\mathbf{\tilde{v}}_A \nabla c_A) (\mu_{\varepsilon}-\mu_A)\,\mathrm{d}x\\
&+\varepsilon\int_\Omega[\mathbf{w}|_\Gamma\nabla c_{A,0}-\mathbf{w}|_\Gamma \, \mathbf{Q}]\,(\mu_{\varepsilon}-\mu_A) \,\mathrm{d}x
-\varepsilon \int_\Omega\mathcal{C}(\mu_{\varepsilon}-\mu_A) \,\mathrm{d}x.
\end{split}
\end{eqnarray}
Since $\mu_{\varepsilon}-\mu_A=-\varepsilon\Delta u+\frac{1}{\varepsilon}[f'(c_\varepsilon)-f'(c_A)]$ and $f'(s)=\frac{s^3-s}{2}$, it follows that
\begin{align*}
&\varepsilon\int_0 ^t \!\int_\Omega \partial_t u (\mu_{\varepsilon}-\mu_A)\,\mathrm{d}x\,\mathrm{d}\varsigma\\
=&\frac{\varepsilon^2}{2}\, \|\nabla u(t)\|_{L^2(\Omega)}^2-\frac{\varepsilon^2}{2} \|\nabla u(0)\|_{L^2(\Omega)}^2+ \int_0 ^t \!\int_\Omega \partial_t u [\frac{1}{2}(u^3-u)+\frac{3}{2}(c_A^2 u+c_A u^2)]\,\mathrm{d}x\,\mathrm{d}\varsigma\\
=&\frac{\varepsilon^2}{2}\, \|\nabla u(t)\|_{L^2(\Omega)}^2+\frac{1}{8}\| u(t)\|_{L^4(\Omega)}^4-\frac{1}{4}\| u(t)\|_{L^2(\Omega)}^2-\frac{\varepsilon^2}{2} \|\nabla u(0)\|_{L^2(\Omega)}^2-\frac{1}{8}\| u(0)\|_{L^4(\Omega)}^4\\
&+\frac{1}{4}\| u(0)\|_{L^2(\Omega)}^2 + \int_0 ^t \!\int_\Omega \frac{3}{2}\partial_t u (c_A^2 u+c_A u^2)\,\mathrm{d}x\,\mathrm{d}\varsigma.
\end{align*}
To deal with $\int_0 ^t \!\int_\Omega \frac{3}{2}\partial_t u (c_A^2 u+c_A u^2)\,\mathrm{d}x\,\mathrm{d}\varsigma$,
we integrate by parts to find that
\begin{align*}
&\int_0 ^t \!\int_\Omega \frac{3}{2}\partial_t u (c_A^2 u+c_A u^2)\,\mathrm{d}x\,\mathrm{d}\varsigma=\frac{3}{2}\int_0 ^t \!\int_\Omega \frac{1}{2}\partial_t(c_A^2 u^2)+\frac{1}{3}\partial_t(c_A u^3) -\partial_t c_A(c_A u^2+\frac{1}{3}u^3) \,\mathrm{d}x\,\mathrm{d}\varsigma\\
&=\frac{3}{4} \int_\Omega c_A^2 u^2 \,\mathrm{d}x-\frac{3}{4} \int_\Omega c_A(0)^2 u(0)^2 \,\mathrm{d}x+\frac{3}{2}\int_0 ^t \!\int_\Omega \frac{1}{3}\partial_t(c_A u^3) -\partial_t c_A(c_A u^2+\frac{1}{3}u^3) \,\mathrm{d}x\,\mathrm{d}\varsigma.
\end{align*}
Consequently, integrating (\ref{12.15.1}) over $[0, t]$, we arrive at
\begin{align*}
&\frac{\varepsilon^2}{2}\, \|\nabla u(t)\|_{L^2(\Omega)}^2+(1-\varepsilon)\| \mu_{\varepsilon}-\mu_A\|^2_{L^2(\Omega\times(0,t))} +\varepsilon^3 \| \Delta u\|^2 _{L^2(\Omega\times(0,t))} \\
&+\frac{1}{8}\| u(t)\|_{L^4(\Omega)}^4-\frac{1}{4}\| u(t)\|_{L^2(\Omega)}^2+\frac{3}{4} \| (c_A u)(t)\|_{L^2(\Omega)}^2+\frac{1}{\varepsilon}\| f'(c_\varepsilon)-f'(c_A)\|^2_{L^2(\Omega\times(0,t))} \\
= &\frac{\varepsilon^2}{2} \|\nabla u(0)\|_{L^2(\Omega)}^2+\frac{1}{8}\| u(0)\|_{L^4(\Omega)}^4-\frac{1}{4}\| u(0)\|_{L^2(\Omega)}^2+\frac{3}{4} \| (c_A u)(0)\|_{L^2(\Omega)}^2\\
&- \frac{3}{2}\int_0 ^t \!\int_\Omega \frac{1}{3}\partial_t(c_A u^3) -\partial_t c_A(c_A u^2+\frac{1}{3}u^3) \,\mathrm{d}x\,\mathrm{d}\varsigma +2\varepsilon \int_0 ^t \!\int_\Omega [f'(c_\varepsilon)-f'(c_A)]\Delta u\,\mathrm{d}x\,\mathrm{d}\varsigma\\
&-\varepsilon \int_0 ^t \!\int_\Omega (\mathbf{v}_\varepsilon \nabla c_{\varepsilon}-\mathbf{\tilde{v}}_A \nabla c_A) (\mu_{\varepsilon}-\mu_A)\,\mathrm{d}x\,\mathrm{d}\varsigma+\varepsilon \int_0 ^t \!\int_\Omega[\mathbf{w}|_\Gamma\nabla c_{A,0}-\mathbf{w}|_\Gamma \, \mathbf{Q}]\,(\mu_{\varepsilon}-\mu_A) \,\mathrm{d}x\,\mathrm{d}\varsigma\\
&-\varepsilon \int_0 ^t \!\int_\Omega\mathcal{C}(\mu_{\varepsilon}-\mu_A) \,\mathrm{d}x\,\mathrm{d}\varsigma\\
\triangleq&\; \frac{\varepsilon^2}{2} \|\nabla u(0)\|_{L^2(\Omega)}^2+\frac{1}{8}\| u(0)\|_{L^4(\Omega)}^4-\frac{1}{4}\| u(0)\|_{L^2(\Omega)}^2+\frac{3}{4} \| (c_A u)(0)\|_{L^2(\Omega)}^2+\sum_{k=1}^5 J_k.
\end{align*}
Here the terms $J_k$ ($k=1,\cdots ,5$) will be bounded below.

$\bullet$ For $J_2$ and $J_5$, we use H\"{o}lder's inequality and (\ref{4.12.1}) to have
\begin{align}
|J_2|=&\Big|2\varepsilon \int_0 ^t \!\int_\Omega [f'(c_\varepsilon)-f'(c_A)]\Delta u \,\mathrm{d}x\,\mathrm{d}\varsigma \Big|
\leq C\varepsilon \| f''(c_\theta)\|_{L^\infty(\Omega\times(0,t))} \| u\|_{L^2(\Omega\times(0,t))} \|\Delta u\|_{L^2(\Omega\times(0,t))}\notag\\
\leq & C\frac{\|u\|_{L^2(\Omega\times(0,t))}^2}{\varepsilon}+\eta \, \varepsilon^3\|\Delta u\|_{L^2(\Omega\times(0,t))}^2, \label{3.14}
\end{align}
and
\begin{align}
|J_5|=&\Big|-\varepsilon \! \int_0 ^t \!\int_\Omega\mathcal{C}(\mu_{\varepsilon}-\mu_A) \,\mathrm{d}x\,\mathrm{d}\varsigma \Big| \leq C\varepsilon^{\frac{7}{2}} \|(\mu_{\varepsilon}-\mu_A)\|_{L^2(\Omega \times (0,t))} \nonumber\\
\le& C\varepsilon^7 +\eta \|(\mu_{\varepsilon}-\mu_A)\|_{L^2(\Omega\times(0,t))}^2. \label{3.13}
\end{align}
$\bullet$ To control $J_1$, we take into account that
\begin{align*}
\partial_t c_A=\partial_t (\zeta \circ d_\Gamma)(c^{in}-\chi_+ +\chi_-)+(\zeta \circ d_\Gamma)\theta_{0}^{\prime}(\rho)(-\frac{V}{\varepsilon}-\partial_{t}^{\Gamma} h_{\varepsilon})+(\zeta \circ d_\Gamma)(\varepsilon^2
\partial_t c^{in}_2+\varepsilon^3 \partial_t c^{in}_3)
\end{align*}
to conclude $\|\partial_t c_A\|_{L^\infty(\Omega\times(0,t))} \leq C \varepsilon^{-1}$. Consequently,
\begin{align*}
|J_1| =&\Big|-\frac{3}{2}\int_0 ^t \!\int_\Omega \frac{1}{3}\partial_t(c_A u^3) -\partial_t c_A(c_A u^2+\frac{1}{3}u^3) \,\mathrm{d}x\,\mathrm{d}\varsigma\Big|  \\
\le & \frac{1}{2} \int_\Omega c_A(0) u(0)^3 \,\mathrm{d}x +C\|u\|_{L^{\infty}(0,t; L^{2}(\Omega))}\|u\|_{L^{\infty}(0,t; L^{4}(\Omega))}^2\\
& +C\varepsilon^{-1}(\|u\|_{L^2(\Omega\times(0,t))}^2+\|u\|_{L^{\infty}(0, t ; L^{2}(\Omega))}\|u\|_{L^{2}(0, t ; L^{4}(\Omega))}^2).
\end{align*}
Noticing that $\|u\|_{L^4(\Omega)}\le C\|u\|_{L^2(\Omega)}^{\frac{1}{2}}\|\nabla u\|_{L^2(\Omega)}^{\frac{1}{2}}+C\|u\|_{L^2(\Omega)}$,
we obtain
\begin{align}
|J_1|&\leq \frac{1}{2} \int_\Omega c_A(0) u(0)^3 \,\mathrm{d}x+C(\|\nabla u\|_{L^{\infty}(0, t ; L^{2}(\Omega))} \|u\|_{L^{\infty}(0, t ; L^{2}(\Omega))}^2+ \|u\|_{L^{\infty}(0, t ; L^{2}(\Omega))}^3)\notag\\
&+C\varepsilon^{-1}(\|u\|_{L^2(\Omega\times(0,t))}^2+ \|u\|_{L^\infty(0,t;L^2(\Omega))}\|u\|_{L^2(\Omega\times(0,t))}\|\nabla u\|_{L^2(\Omega\times(0,t))}+ \|u\|_{L^\infty(0,t;L^2(\Omega))}\|u\|_{L^2(\Omega\times(0,t))}^2)\notag\\
&\leq \frac{1}{2} \int_\Omega c_A(0) u(0)^3 \,\mathrm{d}x+C(\|u\|_{L^{\infty}(0, t ; L^{2}(\Omega))}+\|\nabla u\|_{L^{\infty}(0, t ; L^{2}(\Omega))})\|u\|_{L^{\infty}(0, t ; L^{2}(\Omega))}^2\notag\\
&\quad +C( 1+\|u\|_{L^\infty(0,t;L^2(\Omega))}+\frac{\|u\|_{L^\infty(0,t;L^2(\Omega))}^2}{\varepsilon^2})\frac{\|u\|_{L^2(\Omega\times(0,t))}^2}{\varepsilon}+\eta \varepsilon \|\nabla u\|^2_{L^2(\Omega\times(0,t))}\label{3.16}.
\end{align}
$\bullet$ The term $J_3$ can be bounded as follows.
\begin{align*}
|J_3| & = \Big|-\varepsilon \int_0 ^t \!\int_\Omega (\mathbf{\tilde{v}}_A \nabla u+\mathbf{w} \nabla c_A+\mathbf{w} \nabla u) (\mu_{\varepsilon}-\mu_A)\,\mathrm{d}x\,\mathrm{d}\varsigma \Big|\\
&\le C\varepsilon (\|\mathbf{\tilde{v}}_A\|_{L^\infty(\Omega\times(0,t))} \|\nabla u\|_{L^2(\Omega\times(0,t))}+\|\mathbf{w}\|_{L^2(\Omega\times(0,t))} \|\nabla c_A\|_{L^\infty(\Omega\times(0,t))}\\
&\quad+\|\mathbf{w}\|_{L^4(\Omega\times(0,t))} \|\nabla u\|_{L^4(\Omega\times(0,t))}) \|\mu_{\varepsilon}-\mu_A\|_{L^2(\Omega\times(0,t))}\\
&\le C\varepsilon (\|\nabla u\|_{L^2(\Omega\times(0,t))}+\varepsilon ^{-1}\|\mathbf{w}\|_{L^2(\Omega\times(0,t))}
+\|\nabla u\|_{L^{4}(\Omega \times(0, t))} \|\mathbf{w}\|_{L^{\infty}(0, t ; L^{2}(\Omega))}^{\frac{1}{2}}
\|\nabla \mathbf{w}\|_{L^{2} (\Omega \times(0, t))}^{\frac{1}{2}}\\
&\quad+\|\nabla u\|_{L^{4}(\Omega \times(0, t))}T_{0}^{\frac{1}{4}}\|\mathbf{w}\|_{L^{\infty}(0, t ; L^{2}(\Omega))} )
\|\mu_{\varepsilon}-\mu_A\|_{L^2(\Omega\times(0,t))},
\end{align*}
where in the last equality, the following estimate is used.
$$
\|\mathbf{w}\|_{L^{4}(\Omega \times(0, t))} \le C\left(\|\mathbf{w}\|_{L^{\infty}(0, t ; L^{2}(\Omega))}^{\frac{1}{2}}
\|\nabla \mathbf{w}\|_{L^{2}(\Omega \times(0, t))}^{\frac{1}{2}}+T_{0}^{\frac{1}{4}}\|\mathbf{w}\|_{L^{\infty}(0, t ; L^{2}(\Omega))}\right),
\;\;\; \forall\, t\in (0,T_{0}].
$$
Moreover, thanks to (\ref{4.11.1}), we have
\begin{eqnarray}
\begin{split} \label{3.12}
|J_3| \leq &\;C \|\mathbf{w}\|_{L^2(\Omega\times(0,t))}^2+C(\varepsilon^4 \|\nabla u\|_{L^{4}(\Omega \times(0, t))}^4+T_0^{\frac{1}{2}} \varepsilon^2 \|\nabla u\|_{L^{4}(\Omega \times(0, t))}^2 ) \|\mathbf{w}\|_{L^{\infty}(0, t ; L^{2}(\Omega))}^2\\
&+\eta \|\nabla\mathbf{w}\|_{L^2(\Omega\times(0,t))}^2+C\varepsilon^2 \|\nabla u\|^2_{L^2(\Omega\times(0,t))}+\eta \|\mu_{\varepsilon}-\mu_A\|^2_{L^2(\Omega\times(0,t))}\\
\leq &\;C \|\mathbf{w}\|_{L^2(\Omega\times(0,t))}^2+\eta \|\nabla\mathbf{w}\|_{L^2(\Omega\times(0,t))}^2+C\varepsilon^2 \|\nabla u\|^2_{L^2(\Omega\times(0,t))}+\eta \|\mu_{\varepsilon}-\mu_A\|^2_{L^2(\Omega\times(0,t))}\\
&+C(\varepsilon^4\|\nabla u\|_{L^{\infty}(0, t ; L^{2}(\Omega))}^2 \|\Delta u\|_{L^{2}(\Omega \times(0, t))}^2+T_{0}\varepsilon^4\|\nabla u\|_{L^{\infty}(0, t ; L^{2}(\Omega))}^4\\
&\quad\quad+\varepsilon^2\|\nabla u\|_{L^{\infty}(0, t ; L^{2}(\Omega))} \|\Delta u\|_{L^{2}(\Omega \times(0, t))} +T_{0}\varepsilon^2\|\nabla u\|_{L^{\infty}(0, t ; L^{2}(\Omega))}^2)\|\mathbf{w}\|_{L^{\infty}(0, t ; L^{2}(\Omega))}^2
\end{split}
\end{eqnarray}
for a suitably small $\eta$.\\
$\bullet$ We use (\ref{2.25.1}) and (\ref{12.21.3}) to write $J_4$ as
\begin{align*}
J_4=\int_0 ^t\!\int_{\Gamma_{t}(2 \delta)}(\zeta \circ d_\Gamma)\mathbf{w}|_\Gamma\theta_{0}^{\prime}(\rho)(\mathbf{n}
-\varepsilon \nabla_\tau h_{\varepsilon})(\mu_{\varepsilon}-\mu_A) \,\mathrm{d}x\,\mathrm{d}\varsigma \triangleq\; J_{41}+J_{42},
\end{align*}
where the term $J_{41}$ can be estimated as follows.
\begin{align*}
|J_{41}|&=\Big|\int_0 ^t\!\int_{\Gamma_{t}(2 \delta)}(\zeta \circ d_\Gamma)\mathbf{w}|_\Gamma\theta_{0}^{\prime}(\rho)\mathbf{n}(\mu_{\varepsilon}-\mu_A) \,\mathrm{d}x\,\mathrm{d}\varsigma\Big|\\
&\le C \varepsilon^{\frac{1}{4}}\|\theta_{0}'\|_{L^\infty(0,t;L^{4}(\mathbb{R}))} \|\mathbf{n} \|_{L^\infty(\Gamma_{t}(2 \delta)\times(0,t))} \|\mathbf{w}|_\Gamma\|_{L^2(0,t;L^{2,4}\left(\Gamma_{t}(2 \delta)\right))}\|\mu_{\varepsilon}-\mu_A\|_{L^2(\Gamma_{t}(2 \delta)\times(0,t))}\\
&\le C \varepsilon^{\frac{1}{4}} \|\nabla \mathbf{w}\|_{L^2(\Omega\times(0,t))}\|\mu_{\varepsilon}-\mu_A\|_{L^2(\Omega\times(0,t))}\le C \varepsilon^{\frac{1}{2}} \|\nabla \mathbf{w}\|_{L^2(\Omega\times(0,t))}^2+\eta \|\mu_{\varepsilon}-\mu_A\|_{L^2(\Omega\times(0,t))}^2.
\end{align*}
As for $J_{42}$, noticing that $h_{\varepsilon}= h_{1}+\varepsilon h_{2}$, one finds that
\begin{align*}
|J_{42}|=&\Big|-\int_0 ^t\!\int_{\Gamma_{t}(2 \delta)}\varepsilon (\zeta \circ d_\Gamma)\mathbf{w}|_\Gamma\theta_{0}^{\prime}(\rho)\nabla_\tau h_{\varepsilon}(\mu_{\varepsilon}-\mu_A) \,\mathrm{d}x\,\mathrm{d}\varsigma\Big|\\
\le &C\varepsilon \|\mathbf{w}|_\Gamma\|_{L^2(\Gamma_{t}(2 \delta)\times(0,t))} \|\theta_{0}' \nabla_{\tau} h_{1} \|_{L^\infty(\Gamma_{t}(2 \delta)\times(0,t))} \|\mu_{\varepsilon}-\mu_A\|_{L^2(\Gamma_{t}(2 \delta)\times(0,t))}\\
& +C\varepsilon^2 \|\mathbf{w}|_\Gamma\|_{L^2(0,t;L^{4,2}\left(\Gamma_{t}(2 \delta)\right))} \|\theta_{0}' \|_{L^\infty(\Gamma_{t}(2 \delta)\times(0,t))}\|\nabla_{\tau} h_{2} \|_{L^\infty(0,t;L^{4}(\mathbb{T}^{1}))} \|\mu_{\varepsilon}-\mu_A\|_{L^2(\Gamma_{t}(2 \delta)\times(0,t))}\\
\le& C\varepsilon^2 \|\nabla \mathbf{w}\|_{L^2(\Omega\times(0,t))}^2+\eta \|\mu_{\varepsilon}-\mu_A\|_{L^2(\Omega\times(0,t))}^2.
\end{align*}
Consequently, putting the above two estimates together, we conclude
\begin{align} \label{12.22.4}
|J_4| \le C \varepsilon^{\frac{1}{2}} \|\nabla \mathbf{w}\|_{L^2(\Omega\times(0,t))}^2+\eta \|\mu_{\varepsilon}
-\mu_A\|_{L^2(\Omega\times(0,t))}^2.
\end{align}

From the estimates (\ref{3.14}), (\ref{3.13}), and (\ref{3.16}), (\ref{3.12}) and (\ref{12.22.4}), we obtain Proposition \ref{P3.3}.
\end{proof}

\section{Proof of Theorem \ref{theorem:thm1.1} }
Based on the prior estimates established in Section 3, we are ready to prove Theorem \ref{theorem:thm1.1} by the a priori energy estimate method.
\begin{proof}
Putting the estimates (\ref{9.1.4}), (\ref{9.1.5}) and (\ref{9.1.6}) together, and choosing $\eta$ suitably small, we obtain
\begin{align*}
&\frac{1}{2}\, \|\mathbf{w}(t)\|_{L^2(\Omega)}^2+(\frac{1}{2\varepsilon}-\frac{1}{4})\|u(t)\|_{L^2(\Omega)}^2+\frac{\varepsilon^2}{2}\, \|\nabla u(t)\|_{L^2(\Omega)}^2\\
&+\frac{1}{2}\|\nabla \mathbf{w}\|_{L^2(\Omega\times(0,t))}^2 +\frac{1}{2\varepsilon}\|\nabla_\tau u\|_{L^2(\Omega\times(0,t))}^2+\frac{\varepsilon}{2} \|\nabla u\|^2_{L^2(\Omega\times(0,t))}+\frac{3}{4}\varepsilon^3\|\Delta u\|_{L^2(\Omega\times(0,t))}^2\\
&+\frac{3}{4}\| \mu_{\varepsilon}-\mu_A\|^2_{L^2(\Omega\times(0,t))} +\frac{1}{8}\| u(t)\|_{L^4(\Omega)}^4+\frac{3}{4} \| (c_A u)(t)\|_{L^2(\Omega)}^2+\frac{1}{\varepsilon}\| f'(c_\varepsilon)-f'(c_A)\|^2_{L^2(\Omega\times(0,t))} \\
\leq& \;\frac{1}{2} \|\mathbf{w}(0)\|_{L^2(\Omega)}^2+(\frac{1}{2\varepsilon}-\frac{1}{4})\|u(0)\|_{L^2(\Omega)}^2+\frac{\varepsilon^2}{2} \|\nabla u(0)\|_{L^2(\Omega)}^2+\frac{1}{8}\| u(0)\|_{L^4(\Omega)}^4+\frac{3}{4} \| (c_A u)(0)\|_{L^2(\Omega)}^2\\
&+\frac{1}{2} \| (c_A u^3)(0)\|_{L^1(\Omega)}+C \|\mathbf{w}\|_{L^2(\Omega\times(0,t))}^2+C\varepsilon \|\nabla u\|_{L^{\infty}(0, t ; L^{2}(\Omega))}^4+C\varepsilon^3 \|\Delta u\|_{L^{2}(\Omega \times(0, t))}^4\\
&+C(\varepsilon^4\|\nabla u\|_{L^{\infty}(0, t ; L^{2}(\Omega))}^2 \|\Delta u\|_{L^{2}(\Omega \times(0, t))}^2+T_{0}\varepsilon^4\|\nabla u\|_{L^{\infty}(0, t ; L^{2}(\Omega))}^4\\
&\quad\quad+\varepsilon^2\|\nabla u\|_{L^{\infty}(0, t ; L^{2}(\Omega))} \|\Delta u\|_{L^{2}(\Omega \times(0, t))} +T_{0}\varepsilon^2\|\nabla u\|_{L^{\infty}(0, t ; L^{2}(\Omega))}^2)\|\mathbf{w}\|_{L^{\infty}(0, t ; L^{2}(\Omega))}^2\\
&+C\varepsilon (\|u\|_{L^{\infty}(0, t ; L^{2}(\Omega))}+\|\nabla u\|_{L^{\infty}(0, t ; L^{2}(\Omega))})\frac{\|u\|_{L^{\infty}(0, t ; L^{2}(\Omega))}^2}{\varepsilon}\\
&+C( 1+\|u\|_{L^\infty(0,t;L^2(\Omega))}+\frac{\|u\|_{L^\infty(0,t;L^2(\Omega))}^2}{\varepsilon^2})\frac{\|u\|_{L^2(\Omega\times(0,t))}^2}{\varepsilon}+C_1\,T^\frac{1}{2}\frac{\|u\|_{L^\infty(0,t;L^2(\Omega))}^4}{\varepsilon^6}+ C_2\,\varepsilon^4.
\end{align*}

In the calculations that follow, we further require the following a priori assumptions:
\begin{align}
\label{4.1}
\|u\|_{L^\infty(0,t;L^2(\Omega))} \le R \,\varepsilon^{\frac{5}{2}},\quad \|\nabla u\|_{L^\infty(0,t;L^2(\Omega))} \le R \,\varepsilon^{\frac{7}{8}}\quad \text{and}\quad \|\Delta u\|_{L^{2}(\Omega \times(0, t))} \le R \,\varepsilon^{\frac{3}{8}}.
\end{align}
Let $C_0=\max\{C,C_1, C_2\} \le \frac{R^2}{100}$ and the initial data satisfy
\begin{align*}
\frac{1}{2} \|\mathbf{w}(0)\|_{L^2(\Omega)}^2 & + (\frac{1}{2\varepsilon}-\frac{1}{4})\|u(0)\|_{L^2(\Omega)}^2+\frac{\varepsilon^2}{2}
\|\nabla u(0)\|_{L^2(\Omega)}^2\\
& +\frac{1}{8}\| u(0)\|_{L^4(\Omega)}^4+\frac{3}{4} \| (c_A u)(0)\|_{L^2(\Omega)}^2 +\frac{1}{2} \| (c_A u^3)(0)\|_{L^1(\Omega)}
\le C_0 \varepsilon^4.
\end{align*}
Then, we have
\begin{align*}
&\frac{1}{4}\, \|\mathbf{w}(t)\|_{L^2(\Omega)}^2+\frac{1}{4\varepsilon}\, \|u(t)\|_{L^2(\Omega)}^2+\frac{\varepsilon^2}{2}\,
\|\nabla u(t)\|_{L^2(\Omega)}^2 \\
&+\frac{1}{2}\|\nabla \mathbf{w}\|_{L^2(\Omega\times(0,t))}^2 +\frac{1}{2\varepsilon}\|\nabla_\tau u\|_{L^2(\Omega\times(0,t))}^2+\frac{\varepsilon}{2} \|\nabla u\|^2_{L^2(\Omega\times(0,t))}+\frac{\varepsilon^3}{2}
\|\Delta u\|_{L^2(\Omega\times(0,t))}^2 \\
&+\frac{1}{2} \| \mu_{\varepsilon}-\mu_A\|^2_{L^2(\Omega\times(0,t))} +\frac{1}{8}\| u(t)\|_{L^4(\Omega)}^4
+\frac{3}{4} \| (c_A u)(t)\|_{L^2(\Omega)}^2+\frac{1}{\varepsilon}\| f'(c_\varepsilon)-f'(c_A)\|^2_{L^2(\Omega\times(0,t))} \\
\leq& \;C_0 \|\mathbf{w}\|_{L^2(\Omega\times(0,t))}^2+C_0\frac{\|u\|_{L^2(\Omega\times(0,t))}^2}{\varepsilon}
+ C_0(1+T^\frac{1}{2}R^4)\varepsilon^4.
\end{align*}
Consequently, an application of Gronwall's inequality to the above inequality leads to
\begin{align*}
&\frac{1}{4}\, \|\mathbf{w}(t)\|_{L^2(\Omega)}^2+\frac{1}{4\varepsilon}\, \|u(t)\|_{L^2(\Omega)}^2+\frac{\varepsilon^2}{2}\, \|\nabla u(t)\|_{L^2(\Omega)}^2\\
&+\frac{1}{2}\|\nabla \mathbf{w}\|_{L^2(\Omega\times(0,t))}^2 +\frac{1}{2\varepsilon}\|\nabla_\tau u\|_{L^2(\Omega\times(0,t))}^2+\frac{\varepsilon}{2} \|\nabla u\|^2_{L^2(\Omega\times(0,t))}+\frac{\varepsilon^3}{2}\|\Delta u\|_{L^2(\Omega\times(0,t))}^2\\
&+\frac{1}{2} \| \mu_{\varepsilon}-\mu_A\|^2_{L^2(\Omega\times(0,t))} +\frac{1}{8}\| u(t)\|_{L^4(\Omega)}^4+\frac{3}{4} \| (c_A u)(t)\|_{L^2(\Omega)}^2+\frac{1}{\varepsilon}\| f'(c_\varepsilon)-f'(c_A)\|^2_{L^2(\Omega\times(0,t))} \\
\leq& \;C_0( 1+T^\frac{1}{2}R^4)\varepsilon^4 (1+C_0 te^{C_0 t})  \leq \,\frac{R^2\varepsilon^4}{16}\qquad
\mbox{for all }t \in [0,T],
\end{align*}
provided that $T$ is sufficiently small. Hence,
$$\|u\|_{L^\infty(0,t;L^2(\Omega))} \le \frac{1}{2}R \,\varepsilon^{\frac{5}{2}},\;\;\;
\|\nabla u\|_{L^\infty(0,t;L^2(\Omega))} \le R \,\varepsilon , \;\;\;
\|\Delta u\|_{L^2(\Omega\times(0,t))} \le R \,\varepsilon^{\frac{1}{2}}.
$$
Therefore, the a priori assumptions (\ref{4.1}) are satisfied. Moreover, the a priori assumptions (\ref{9.6.2})
are also valid by virtue of \cite[Lemma 4.2]{Liu Y N}. So, the proof of Theorem \ref{theorem:thm1.1} is complete.
\end{proof}

\vspace{.1in}

\noindent{\bf Acknowledgments:} The third author thanks Yuning Liu for many discussions and pointing him to the references \cite{Fei, Liu-Y}.
The research of F. Xie was partially supported by NSFC
%% National Natural Science Foundation of China
(Grant No.11831003) and Shanghai Science and Technology Innovation Action Plan (Grant No. 20JC1413000),
and the research of S. Jiang by National Key R\&D Program (2020YFA0712200), National Key Project
(GJXM92579), and NSFC (Grant No. 11631008), the Sino-German Science Center (Grant No. GZ 1465)
and the ISF{NSFC joint research program (Grant No. 11761141008).

\end{document}